%%%%%%%%%
\documentclass[a4paper,12pt]{amsart}
\usepackage{amssymb, amsmath, ascmac, amsthm, geometry}

\usepackage[bookmarkstype=toc, colorlinks=true, linkcolor=blue, pdfborder={0 0 0}, bookmarks=true, bookmarksnumbered=true]{hyperref}
\geometry{left=30mm,right=30mm,top=30mm,bottom=30mm}

%定理環境
\newtheorem{introtheorem}{Theorem}
\newtheorem{introprop}[introtheorem]{Proposition}
\newtheorem{theorem}{Theorem}%[section]
\newtheorem{lemma}[theorem]{Lemma}

\newtheorem{prop}[theorem]{Proposition}
\theoremstyle{definition}
\newtheorem{definition}[theorem]{Definition}
\theoremstyle{remark}

\newtheorem{remark}[theorem]{Remark}

%数学記号と指示の略
\def\dets{(\det{S})}
\def\no{\nonumber}
\def\pt#1#2{\frac{\partial #1}{\partial #2}}
\def\dpt#1#2{\dfrac{\partial #1}{\partial #2}}

\def\R{\mathbb{R}}

\def\O{\mathbb{O}}

\def\w{\wedge}
\def\we{\wedge}
\def\cal#1{\mathcal{#1}}
\def\rm#1{\mathrm{#1}}
\def\bf#1{\mathbf{#1}}
\def\un#1{\underline{#1}}
\def\vol{\rm{vol}}
\def\tr{\rm{tr}}
\def\ric{(\rm{Ric})}
%ギリシャ文字
\def\a{\alpha}
\def\al{\alpha}
\def\b{\beta}

\def\g{\gamma}
\def\ga{\gamma}

\def\d{\delta}

\def\e{\epsilon}

\def\k{\kappa}

\def\r{\rho}
\def\s{\sigma}
\def\o{\omega}
\def\om{\omega}
\def\Om{\Omega}
%リー群と行列

\def\gl#1{\mathrm{GL}(#1;\R)}

\def\sym#1{\mathrm{Sym}(#1;\mathbb{R})}
\def\psym#1{\mathrm{Sym}^+(#1;\mathbb{R})}
\def\so#1{\mathrm{SO}(#1)}
\def\fso#1{\mathfrak{so}(#1)}
\def\su#1{\mathrm{SU}(#1)}

\def\t#1{\mathrm{T}^{#1}}

\def\ant#1{\mathrm{Ant}(#1;\R)}

\begin{document}
\title{$G_2$-manifolds and the ADM formalism}
\author{Ryohei Chihara}
\begin{abstract}
In this paper we study a Hamiltonian function on the cotangent bundle of the space of Riemannian metrics on a 3-manifold $M$ and prove the orbits of the constrained Hamiltonian dynamical system correspond to $G_2$-manifolds foliated by hypersurfaces diffeomorphic to $M\times \mathrm{SO}(3)$. 
\end{abstract}
\address{Graduate School of Mathematical Sciences, The University of Tokyo, 3-8-1 Komaba Meguro-ku Tokyo 153-8914, Japan}
\email{rchihara@ms.u-tokyo.ac.jp}
\keywords{ADM formalism, Hamiltonian dynamical systems, $G_2$-manifolds}
\subjclass{53C25, 3C26, 53C38}
%\thanks{}
\maketitle
%%%%%%%%%%
\section*{Introduction}\label{sec:int}
%%%%%%%

Let $M$ be a closed oriented 3-manifold. Denote by $\bf{M}$ the space of Riemannian metrics on $M$ and by $T^*\bf{M}$ the cotangent bundle of $\bf{M}$. The cotangent bundle $T^*\bf{M}$ has the standard symplectic structure as in cases of finite-dimensional ones. The symplectic geometry on $T^*\bf{M}$ has been studied in relation with the initial value problem for the vacuum Einstein equations. In \cite{ADM}, the vacuum Einstein equations were reformulated as constrained Hamiltonian dynamical systems on $T^*\bf{M}$. The Hamiltonian formalism is called the ADM formalism, named after the authors Arnowitt, Deser and Misner. By choosing specific lapse functions and shift vector fields, the Hamiltonian function is defined by
\begin{align*}
 H_{GR}(\g,\pi)=-\int_{M}\mathrm{R}(\ga)\mathrm{vol}(\g)+\int_{M}\left(\rm{tr}(\pi^2)-\frac{1}{2}\rm{tr}(\pi)^2\right)\mathrm{vol}(\g)^{-1}\quad\text{on}\quad T^*\bf{M}.
\end{align*}
Here $(\g,\pi)\in T^*\bf{M}$, $\g\in \bf{M}$ and $\pi$ is a symmetric $(2,0)$-tensor field tensored with a volume form on $M$.
Also $\rm{R}(\g)$, $\rm{vol}(\g)$, $\rm{tr}(\pi)$ and $\rm{tr}(\pi^2)$ denote the scalar curvature, volume element of $\g$ and traces of $\pi$ and $\pi^2$, respectively. Then for a curve $(\g(t),\pi(t))$ in $T^*\bf{M}$ defined on an interval $(t_1,t_2)$, they 
showed that the Lorentzian metric $-(dt)^2+\g_{ij}dx^idx^j$ on $(t_1,t_2)\times M$ was Ricci-flat (i.e., a vacuum solution) if and only if $(\g(t),\pi(t))$ was an orbit of the Hamiltonian dynamical system of $H_{GR}$ satisfying the constraint conditions. The pointwise constraint conditions are
$R(\g)\rm{vol}(\g)^2+\dfrac{1}{2}\rm{tr}(\pi)^2-\rm{tr}(\pi^2)=0$ and $\displaystyle\sum_{j=1}^3\nabla_{j}\pi^{ij}=0$ for $i=1,2,3$. Here $\nabla$ denotes the Levi-Civita connection for each $\g(t)$. See \cite{ADM, BM, HE} for more details.

%Theme
In the present paper, we study a Hamiltonian function similar to that of the ADM formalism defined by
\begin{align*}
H_{G_2}(\ga,\pi)=-\int_{M}\mathrm{R}(\ga)\mathrm{vol}(\ga) + \int_{M}\mathrm{det}(\pi)\mathrm{vol}(\ga)^{-2} \quad\text{on}\quad T^*\bf{M}.
\end{align*}
Here $\det{(\pi)}$ denotes the determinant of $\pi$, proportional to the third tensor power of $\rm{vol}(\g)$. 
As a main result, we establish a correspondence between the orbits of the constrained Hamiltonian dynamical system of $H_{G_2}$ and a class of $\so3$-invariant $G_2$-manifolds foliated by hypersurfaces diffeomorphic to $M\times \so3$. 
In the following, we explain how this ADM-like formulation of $G_2$-manifolds is obtained, and state our main results more precisely.

%resultの背景
Here we recall the notion of a $G_2$-manifold. A $G_2$-manifold is a 7-dimensional Riemannian manifold with holonomy group contained in the exceptional Lie group $G_2$. %共役類に入るということか%
Such a manifold is Ricci-flat, and one with full holonomy $G_2$ is an example of Berger's list of Riemannian manifolds with special holonomy. A $G_2$-manifold is determined by a torsion-free $G_2$-structure. Moreover a $G_2$-structure on a 7-dimensional manifold is specified by a 3-form pointwisely isomorphic to $\langle x\times y,z\rangle$ for $x,y,z\in\R^7.$ Here $\langle *,*\rangle$ and $\times$ denote the scalar product and cross product on $\R^7$, defined by the structure of Octonions $\O$. See \cite{BS, CS, Joy} for more details.

In \cite{Hit}, in terms of differential forms, Hitchin described locally $G_2$-manifolds as orbits of a constrained Hamiltonian dynamical system on the direct product of a fixed cohomology class of 3- and 4-forms on a 6-manifold. Then each point of orbits corresponds to an $\su3$-structure on the 6-manifold. This formulation is similar to the ADM formalism above in that the orbits of both dynamical systems generate equidistant hypersurfaces in ambient spaces \cite{Bry}. But Hitchin's Hamiltonian function is a volume functional that contains no derivatives, and the relation between Hitchin's and the ADM formalism is somewhat obscure.

On the other hand, Bryant and Salamon constructed first examples of complete full-holonomy $G_2$-manifolds \cite{BS}. Ones of their examples are diffeomorphic to $M\times \R^4$ and naturally regarded as one-parameter families of $\su2$-invariant (almost) special Lagrangian fibrations $M\times S^3$. 

In \cite{Chi}, we generalized settings in \cite{BS} by applying Hitchin's description above of $G_2$-manifolds to $\so3$-invariant $\su3$-structures on $M\times \so3$ whose fibers $\so3$ are (almost) special Lagrangin submanifolds. Here we can identify $M\times\su2$ with $M\times\so3$ by the double covering, since our results in \cite{Chi} are restricted to the description of regular parts. We showed one-to-one correspondence such $\su3$-structures and triples of a solder 1-forms, connection 1-forms and equivariant positive-definite symmetric $3\times3$ matrix-valued functions on $M\times\so3$. Then we described locally a class of $\so3$-invariant $G_2$-manifolds by a constrained dynamical system on the space of triples on $M\times \so3$. For $\t3$-invariant $G_2$-manifolds, see \cite{MS}, in which the authors studied $\t3$-invariant ones in terms of toric geometry.

The present paper is regarded as a continuation of our previous work \cite{Chi}. Our main result gives a characterization of the dynamical system in \cite{Chi} as the constraint Hamiltonian dynamical system of $H_{G_2}$ on $T^*\bf{M}$. In this characterization, we use a principal bundle $\cal{P}\to T^*\bf{M}$, where $\cal{P}$ is isomorphic to the direct product of the space of solder 1-forms and the space of equivariant symmetric $3\times3$ matrix-valued functions on $M\times\so3$. Also the structure group is the gauge group of $M\times\so3$. The principal bundle $\cal{P}$ has a natural connection, and by this a curve $\un{c}(t)$ in $T^*\bf{M}$ defined on an interval $(t_1,t_2)$ is lifted to a curve $c(t)$ in $\cal{P}$. Then the lift $c(t)$ gives a $G_2$-structure on $M\times\so3\times(t_1,t_2)$. Our main results are as follows:
\begin{introtheorem}\label{a}
Let $\un{c}(t)=(\g(t),\pi(t))$ be a curve in $T^*\bf{M}$. Then a lift $c(t)$ of $\un{c}(t)$ gives a torsion-free $G_2$-structure if and only if the curve $\un{c}(t)$ satisfies the following:
\begin{itemize}
\item $\un{c}(t)$ is a solution of the Hamiltonian dynamical system of $H_{G_2}$.
\item $\pi/\rm{vol}(\g)$ is positive-definite and $\displaystyle\sum_{j=1}^3\nabla_{j}\pi^{ij}=0$ holds for $i=1,2,3$.
\end{itemize}
Here the constraint conditions above are all pointwise, and satisfied for all $t$.
\end{introtheorem}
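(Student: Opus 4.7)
The plan is to reduce the assertion to the description of $\so3$-invariant torsion-free $G_2$-structures on $M\times\so3\times(t_1,t_2)$ obtained in \cite{Chi}, and then to match those evolution equations with Hamilton's equations for $H_{G_2}$ on $T^*\bf{M}$ computed directly from the formula for $H_{G_2}$.

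First I would compute Hamilton's equations explicitly. The variation of the Einstein--Hilbert term $-\int_M \mathrm{R}(\g)\mathrm{vol}(\g)$ with respect to $\g$ produces the Einstein tensor $\mathrm{Ric}(\g)-\tfrac12 \mathrm{R}(\g)\g$ paired with the volume form, as in the usual ADM derivation. The cubic density $\det(\pi)\mathrm{vol}(\g)^{-2}$ differentiates as follows: its derivative in $\pi^{ij}$ is the cofactor $\mathrm{adj}(\pi)_{ij}\mathrm{vol}(\g)^{-2}$, while its derivative in $\g_{ij}$ is $-\det(\pi)\mathrm{vol}(\g)^{-2}\g^{ij}$. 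This yields explicit expressions for $\dot\g_{ij}=\delta H_{G_2}/\delta \pi^{ij}$ and $\dot\pi^{ij}=-\delta H_{G_2}/\delta\g_{ij}$.

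Next I would recall from \cite{Chi} the parametrization of an $\so3$-invariant $G_2$-structure on the cylinder by a triple $(\th(t),A(t),S(t))$ consisting of a solder 1-form, a connection 1-form, and a positive-definite symmetric $3\times 3$ matrix-valued function on $M\times\so3$. The solder form $\th$ induces a metric $\g$ on $M$, and the pair $(S,\g)$ induces a momentum density $\pi$; this descent is precisely the bundle projection $\cal{P}\to T^*\bf{M}$ announced in the excerpt, and the natural connection on $\cal{P}$ horizontally lifts $\un{c}(t)$ to a curve $c(t)$ in which $A(t)$ is determined by the horizontal lift. In particular, positivity of $S$ translates to positivity of $\pi/\mathrm{vol}(\g)$.

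The core step is then to translate the torsion-freeness equations of \cite{Chi} into equations for $(\g,\pi)$ and to match them term by term with the Hamilton equations above. The closed condition on the $G_2$ 3-form produces the evolution of $\g$ together with the momentum constraint $\sum_j\nabla_j\pi^{ij}=0$, the latter reflecting the moment map of the diffeomorphism action on $M$, while the co-closed condition produces the evolution of $\pi$. Conversely, given a solution of the Hamiltonian system with the stated constraints, the horizontal lift via the connection on $\cal{P}$ reconstructs a triple satisfying the equations of \cite{Chi}, hence a torsion-free $G_2$-structure on the cylinder.

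The main obstacle is to verify that the cubic density $\det(\pi)\mathrm{vol}(\g)^{-2}$ is indeed the Hamiltonian reproducing the $G_2$ evolution: unlike the quadratic ADM kinetic term it depends on the full determinant of $\pi$, and its $\pi$-variation must generate precisely the evolution of the transverse metric induced by the $G_2$ 3-form on $M\times\so3\times(t_1,t_2)$. Equally delicate is the identification of the gauge-equivariance built into the parametrization of \cite{Chi} with exactly the momentum constraint $\nabla_j\pi^{ij}=0$, which requires careful tracking of how the gauge group of $M\times\so3$ acts on $(\th,S)$ and of the compatibility of this action with the standard diffeomorphism moment map on $T^*\bf{M}$.
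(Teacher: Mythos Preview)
Your outline is essentially the paper's strategy, with one structural difference: the paper does the comparison \emph{upstairs} on $\cal{P}$ rather than downstairs on $T^*\bf{M}$. It pulls $H$ back to $H'=j^*H$, uses $\Om'=j^*\Om$ to define a horizontal Hamiltonian vector field $X^{H'}$ on $\cal{P}$, and then checks directly in frame variables $(e,S\mathrm{vol}(e))$ that $X^{H'}$ equals the vector field $X^s$ coming from the evolution equations of \cite{Chi} (Proposition~\ref{prop:pre5}). The variation formulas in orthonormal frames (Lemma~\ref{lem:var3}) make this a two-line computation for each of the Einstein--Hilbert and determinant pieces, whereas your downstairs computation in $(\g_{ij},\pi^{ij})$ would require pushing the \cite{Chi} equations through Lemma~\ref{lem:pre1}. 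Both routes work; the paper's is shorter. The paper also first proves the statement for a rescaled Hamiltonian $H=-\tfrac12 H_1+8H_2$ and then deduces the $H_{G_2}$ version by the scaling in Proposition~\ref{prop:prop1}; you can skip that by working with $H_{G_2}$ from the start, at the cost of carrying constants.

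Two points in your write-up need correction. First, the connection $A(t)$ in the triple is not ``determined by the horizontal lift'': it is always the Levi--Civita connection $a_{LC}$ of $e(t)$, imposed by the half-flat condition (this is how $\cal{P}^+$ embeds in $\cal{P}^+\times\cal{A}$). The horizontal-lift condition on $\cal{P}$ concerns only the symmetry of $\dot e$ in the frame index. Second, the constraint $\nabla_j\pi^{ij}=0$ is not obtained by matching the $\so3$ gauge group $\cal{G}$ with the diffeomorphism moment map; $\cal{G}$ consists of vertical bundle automorphisms and its infinitesimal action lands in the antisymmetric part of $T_e\cal{M}$, so its moment map is unrelated to $\nabla_j\pi^{ij}$. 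In the paper the constraint is simply the direct translation of the half-flat condition $S_{i\alpha;\alpha}=0$ via $\pi^{ij}\partial_i\partial_j=\tfrac12(f^*S)_{ij}e_ie_j$, with no moment-map argument needed.
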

Theorem \ref{a} is deduced from Theorem \ref{thm:con1} by Remark \ref{rem:prop1}. 
%The coefficients of $H_{G_2}$ differ from those of $H$ in Theorem \ref{thm:con1}. The orbits of the Hamiltonian dynamical system of  $H_{G_2}$ are transformed to those of $H$ by scaling with fixed positive constants in Remark \ref{rem:prop1}. 

Moreover the orbits of the Hamiltonian dynamical system of $H_{G_2}$ preserve the divergence-free conditions for $\pi$ as in the case of that of the ADM formalism. 
\begin{introprop}\label{b}
Let $\un{c}(t)$ be an orbit of the Hamiltonian dynamical system of $H_{G_2}$. Assume that  $\displaystyle\sum_{j=1}^3\nabla_{j}\pi^{ij}(t_0)=0$ holds pointwisely for $i=1,2,3$ at some $t_0$. Then $\displaystyle\sum_{j=1}^3\nabla_{j}\pi^{ij}(t)=0$ holds for $i=1,2,3$ on the whole orbit.
\end{introprop}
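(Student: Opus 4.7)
The plan is to exploit the diffeomorphism invariance of $H_{G_2}$. The divergence functional $\sum_j\nabla_j\pi^{ij}$ is the classical ``momentum constraint'' of the ADM formalism, and it has a Hamiltonian interpretation: smeared against a vector field $X\in\mathfrak{X}(M)$ it generates the natural Hamiltonian action of $\mathrm{Diff}(M)$ on $T^*\bf{M}$. Since $H_{G_2}$ is manifestly built from $\mathrm{Diff}(M)$-equivariant objects, its Poisson bracket with every component of this moment map vanishes identically, and the constraint is automatically propagated along orbits.

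First I would introduce the smeared constraint functional
\begin{equation*}
C(X)(\g,\pi)=\int_{M}\pi^{ij}(\mathcal{L}_{X}\g)_{ij}=-2\int_{M}\Big(\sum_{j}\nabla_{j}\pi^{ij}\Big)X_{i},
\end{equation*}
and verify, from the canonical symplectic structure of $T^*\bf{M}$, that its Hamiltonian vector field coincides with the lift to $T^*\bf{M}$ of the pullback action of the flow of $X$ on $\bf{M}$. The equivalence of the two forms above is integration by parts, using that $\pi^{ij}$ is a tensor density (so that $\nabla_i(\pi^{ij}X_j)$ is a total divergence). Then simultaneous vanishing of $C(X)$ over all test vector fields $X\in\mathfrak{X}(M)$ is equivalent to the pointwise constraint $\sum_{j}\nabla_{j}\pi^{ij}=0$ assumed in Proposition \ref{b}.

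Next I would check that $H_{G_2}$ is $\mathrm{Diff}(M)$-invariant. The first term $-\int_{M}\rm{R}(\g)\rm{vol}(\g)$ is the three-dimensional Einstein--Hilbert action, and is diffeomorphism invariant by the tensorial nature of $\rm{R}(\g)$ and $\rm{vol}(\g)$. For the second term I would use that $\det(\pi)$ is proportional to $\rm{vol}(\g)^{3}$ (as recorded in the introduction, since $\pi$ is a symmetric $(2,0)$-tensor tensored with $\rm{vol}(\g)$); hence the integrand $\det(\pi)\rm{vol}(\g)^{-2}$ is a $\mathrm{Diff}(M)$-equivariant density on $M$, and its integral is invariant as well.

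Combining these facts, $\{C(X),H_{G_2}\}\equiv 0$ on all of $T^*\bf{M}$ for every $X$, so along any orbit $\un{c}(t)$ of the Hamiltonian flow of $H_{G_2}$ one has
\begin{equation*}
\frac{d}{dt}C(X)(\un{c}(t))=\{C(X),H_{G_2}\}(\un{c}(t))=0.
\end{equation*}
By hypothesis $C(X)(\un{c}(t_{0}))=0$ for every $X$, hence $C(X)(\un{c}(t))=0$ for every $X$ and every $t$, which is exactly the pointwise momentum constraint at all times. The main obstacle I anticipate is the careful bookkeeping for the moment-map identification, in particular checking the weights of the tensor density $\pi$ and the resulting integration-by-parts signs, together with a clean check that the new potential term $\int_{M}\det(\pi)\rm{vol}(\g)^{-2}$ really is coordinate-independent; once these are established the Poisson-bracket argument is formal and closely parallels the classical ADM case.
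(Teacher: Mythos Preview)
Your approach is correct and takes a genuinely different route from the paper. The paper proves the analogous statement for the rescaled Hamiltonian $H$ (Proposition~\ref{prop:con2}) by lifting the orbit to the principal bundle $\cal{P}$ over $T^*\bf{M}$ and computing in the orthonormal-frame variables $(e,S)$: using the variation formulas of Section~\ref{sec:var} one finds the explicit first-order homogeneous linear ODE
\[
\pt{}{t}(S_{i\a;\a})=-\tilde{S}_{\a\a}S_{i\b;\b}-\tilde{S}_{i\a}S_{\a\b;\b},
\]
and propagation of the constraint follows from uniqueness for this system; Proposition~\ref{b} is then obtained from Proposition~\ref{prop:con2} by the scaling in Remark~\ref{rem:prop1}. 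Your moment-map argument bypasses the bundle $\cal{P}$ and the frame calculus entirely, isolating the structural reason: $H_{G_2}$ is built from $\mathrm{Diff}(M)$-natural data, hence Poisson-commutes with the generator $C(X)$ of the diffeomorphism action, which is exactly the smeared momentum constraint. What your route buys is conceptual clarity and an argument that transfers verbatim to $H_{GR}$ or to any other $\mathrm{Diff}(M)$-invariant Hamiltonian on $T^*\bf{M}$; what the paper's route buys is the explicit evolution of $S_{i\a;\a}$ along the flow, which is strictly more information than the preservation of its zero set. One small remark on your bookkeeping: note that conservation of the smeared functional $C(X)$ for each fixed $X$ does \emph{not} say the pointwise divergence $\nabla_j\pi^{ij}$ is constant in $t$ (indeed the paper's ODE shows it is not), because the identification $C(X)=-2\int_M(\nabla_j\pi^{ij})X_i$ involves the time-dependent metric; but this does not affect your conclusion, since vanishing of $C(X)$ for all $X$ at a single time is equivalent to the pointwise constraint at that time.
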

Proposition \ref{b} is deduced from  Proposition \ref{prop:con2} by Remark \ref{rem:prop1}.

The present paper is organized as follows. In Section \ref{sec:pre}, we fix our notation and introduce the principal bundle $\cal{P}\to T^*\bf{M}$. Also we review some results in \cite{Chi}. Section \ref{sec:con} states our main results: Theorem \ref{thm:con1} and Proposition \ref{prop:con2} (corresponding to Thearem \ref{a} and Proposition \ref{b}). In Section \ref{sec:var}, we prepare some elementary variation formulas used in the proof of the main results. In Section \ref{sec:pro}, we prove the main results. Finally in Section \ref{sec:prop}, we study some properties of the Hamiltonian dynamical system of $H_{G_2}$: the behavior of the dynamical system for scaling of $H_{G_2}$ and variations of some functionals along the orbits. Moreover we observe Bryant-Salamon's examples in (\cite{BS}, Section 3) in our formulation, and remark briefly smooth continuation of the orbits to regions corresponding to the indefinite counterparts of $G_2$-manifolds.
%%%%%%%%%%%%%%%%%%%%%%%%%%%%%%%%%%%%%%%%%%%%%
\section{Preliminaries}\label{sec:pre}
%%%%%%%%%%%%%%%%%%%%%%%%%%%%%%%%%%%%%%%%%%%%%%
\subsection{The phase space}
Let $\otimes^p E$, $S^pE$ and $\we^p E$ be the $p$-th tensor, symmetric, and anti-symmetric power of a vector bundle $E$ over a manifold $N$. Denote by $\Om^p(N, E)$ the space of $E$-valued $p$-forms on $N$. We treat all in the class of $C^{\infty}$, and omit the symbol of summation adopting the Einstein notation.

Let $M$ be a closed oriented $3$-manifold. The space of  Riemannian metrics on $M$ is denoted by
\begin{align*}
\mathbf{M}=\Om^0(M,S_{+}^2T^*M),
\end{align*}
where  $S_{+}^2T^*M$ is the positive-definite part of $S^2T^*M$. For $\ga \in \mathbf{M}$, the tangent space $T_{\ga}\mathbf{M}=\Om^0(M,S^2T^*M)$. Then $T\mathbf{M}=\mathbf{M}\times \Om^0(M,S^2T^*M)$.
The cotangent space $T^*_{\ga}\mathbf{M}=\Om^3(M,S^2TM)$ via $T^*_{\ga}\mathbf{M}\otimes T_{\ga}\mathbf{M} \to \mathbb{R}$ given by 
\begin{align}
\langle \pi,h \rangle = \int_{M}\pi^{ij}h_{ij} \quad\text{for}\quad  \pi \in \Om^3(M,S^2TM) \text{ and }h\in \Om^0(M,S^2T^*M).\no
\end{align}
Then we have
\begin{align}\no
T^*\mathbf{M}=\mathbf{M}\times \Om^3(M,S^2TM). 
\end{align}
The space $T^*\mathbf{M}$ has the standard symplectic form $\Om$ given by 
\begin{align}
\Om_{(\ga,\pi)}((h_1,\om_1),(h_2,\om_2))=\int_{M}(h_1\om_2-h_2\om_1) \quad\text{for}\quad(h_1,\om_1), (h_2,\om_2)\in T_{(\ga,\pi)}T^*\mathbf{M}.\no
\end{align}
 We can check $\Om=-d\theta$ for the tautological Liouville 1-form $\theta$ on $T^*\mathbf{M}$ as in cases of finite-dimensional cotangent bundles.

Let $P=M\times\so3$. Since every orientable 3-manifold $M$ is parallelizable, $P$ is regarded as a fixed oriented orthonormal frame bundle over $M$. 
Denote by $\Om^p(P;V)$ the space of $V$-valued $p$-forms on $P$ for a vector space $V$. For a representation $\rho: \mathrm{SO}(3) \to GL(V)$, denote by $\Om^p(P;V)^{\so3}_{hor}$ the space of $\al \in \Om^p(P;V)$ satisfying $R^*_{g}\al = \rho(g^{-1})\al$ and $\iota(A^*)\al=0$ for all $g\in \mathrm{SO}(3)$ and $A\in \mathfrak{so}(3)$. Here $R_g:P\to P$ denotes the right action, $\iota$ the inner product, and $A^*$ the infinitesimal vector field of $A$ on $P$. If $p=0$, then $\Om^0(P;V)^{\so3}_{hor}=\Om^0(P;V)^{\so3}$,  the space of $\so3$-equivariant $V$-valued functions on $P$. Take $\mathbb{R}^3$ with the usual representation of $\mathrm{SO}(3)$.
A 1-form $e=(e^1,e^2,e^3)\in \Om^1(P;\mathbb{R}^3)^{\so3}_{hor}$ is called a {\it solder 1-form} if $e_u^1\we e_u^2\we e_u^3 \neq 0$ for all $u \in P$. 
Put $\mathrm{vol}(e)=e^1\w e^2 \w e^3$, which is a volume form on $M$ because of the $\so3$-equivariance of $e$. Without loss of generality, we assume the orientation of $M$ coincides with $\mathrm{vol}(e)$.
Let $\mathcal{M}$ be the space of solder 1-forms on $P$. Denote by $\mathcal{G}$ the gauge group of $P$. An element $\tau\in \mathcal{G}$ acts on $e\in \mathcal{M}$ from right by $e\cdot\tau=\tau^*e$. For $e \in \mathcal{M}$, a Riemannian metric $\g$ is uniquely defined on $M$ such that $(s^*e^1,s^*e^2,s^*e^3)$ is a local orthonormal coframe for all local sections $s:U\to P$ on all domains $U$ of $M$. Thereby this map defines a natural principal $\mathcal{G}$-bundle over $\mathbf{M}$
\begin{align}\no
i:\mathcal{M} \to \mathbf{M},\quad i(e)=\g.
\end{align}
Let $\mathrm{M}(n;\R)$, $\sym{n}$ and $\ant{n}$ be the space of real $n\times n$ matrices, symmetric and anti-symmetric ones respectively, on which $g\in \mathrm{SO}(n)$ acts by $g\cdot B=gBg^{-1}$, where $B$ is an element of the spaces. For $e\in\mathcal{M}$, the tangent space is
\begin{align*}
    T_e\mathcal{M} &={} \Om^0(P;\mathrm{M}(3;\R))^{\so3} \\
    &={}\Om^0(P;\sym3)^{\so3}\oplus \Om^0(P;\ant3)^{\so3}
\end{align*}
by the identification
\begin{align*}
T_{e}\mathcal{M} \ni (\dot{e}^1,\dot{e}^2,\dot{e}^3)=(T_{1j}e^j,T_{2j}e^j,T_{3j}e^j)\mapsto (T_{ij}) \in \Om^0(P;\mathrm{M}(3;\R))^{\so3}.
\end{align*}
Here the $\so3$-equivariance of $e$ and $\dot{e}$ induces that of $T=(T_{ij})$. The infinitesimal action of $\mathcal{G}$ at  $e \in \mathcal{M}$ maps onto $\Om^0(P;\ant3)^{\so3} \subset T_{e}\cal{M}$. Hence the horizontal distribution
\begin{align*}
    H_{e}\mathcal{M}:=\Om^0(P;\sym3)^{\so3}\quad\text{for all }e\in \mathcal{M}
\end{align*}
gives a connection on $i:\mathcal{M}\to\mathbf{M}$.

\begin{remark}
The connection above is not flat. To see this, let us see the local model of $i:\cal{M} \to \bf{M}$. Let $\rm{Fr}^+(\mathbb{V})$ and $\rm{S}^2_{+}(\mathbb{V}^*)$ be the spaces of oriented basis and positive-definite symmetric 2-forms on a real $n$-dimensional vector space $\mathbb{V}$. Then we have a principal $\so{n}$-bundle $\rm{Fr}^+(\mathbb{V}) \to \rm{S}^2_{+}(\mathbb{V}^*)$. If we fix a basis on $\mathbb{V}$, then this bundle is identified with $\rm{GL}^+(n;\R) \to \psym{n}$. Here $\rm{GL}^+(n;\R)$ and $\psym{n}$ are the identity component of a general linear group and the positive-definite subset of $\sym{n}$. The decomposition $\rm{M}(n;\R)=\sym{n}\oplus \ant{n}$ gives a non-flat connection on the bundle since $[S_1,S_2] \in \ant{n}$ for $S_1,S_2\in \sym{n}$.
\end{remark}

The differential of  $i:\cal{M}\to\mathbf{M}$ restricts to the following principal $\cal{G}$-bundle over $T\mathbf{M}$
\begin{align*}
    i_{*}: H\cal{M}\to T\mathbf{M}, \quad i_{*}(e,T)=(\g,\rho),
\end{align*}
where $H\cal{M}:=\cal{M}\times \Om^{0}(P;\sym3)^{\so3}$ and $T\mathbf{M}=\mathbf{M}\times\Om^0(M,S^2T^*M)$. Here $\tau\in \cal{G}$ acts on $H\cal{M}$ by $(e,T)\cdot\tau=(\tau^*e,\tau^*T)$. The dual of $i_*$ defines a principal $\cal{G}$-bundle over $T^*\bf{M}$
\begin{align}\label{eq:pre3}
    j:\cal{M}\times \Om^3(P;\sym3)^{\so3}_{hor} \to T^*\bf{M},\quad j(e,U)=(i(e),\pi)
\end{align}
by
\begin{align}\no
   \langle \pi,\rho\rangle =\int_{M}\mathrm{tr}(TU)=\int_{M}T_{ij}U_{ij}
\end{align}
for $(i(e),\rho)=i_{*}(e,T)\in T_{\g}\bf{M}$ with $(e,T)\in H_{e}\cal{M}$. Here the trace $\rm{tr}(TU)$ is a volume form on $M$ by $\so3$-equivariance of $T$ and $U$. 
Let 
\begin{align*}
    \cal{P}:=\cal{M}\times \Om^3(P;\sym3)^{\so3}_{hor}.
\end{align*}
Here $\tau\in \cal{G}$ acts on $\cal{P}$ by $(e,U)\cdot \tau=(\tau^*e,\tau^*U)$. Then the horizontal distribution 
\begin{align*}
    H_{(e,U)}\cal{P}:=H_{e}\cal{M}\oplus \Om^3(P;\sym3)^{\so3}_{hor} \quad\text{for all } (e,U)\in \cal{P}
\end{align*}
gives a connection on $j:\cal{P}\to T^*\bf{M}$. On $H\cal{P}\to \cal{P}$, we have a non-degenerate  2-form $\Om' \in \Om^0(\cal{P},\w^2(H\cal{P})^*)$ by 
\begin{align}\no
    \Om'_{(e,U)}((T_1,W_1),(T_2,W_2))=\int_{M}\mathrm{tr}(T_1W_2-T_2W_1)
\end{align}
for $(T_1,W_1), (T_2,W_2)\in H_{(e,U)}\cal{P}$. By Lemma \ref{lem:pre1} stated below, we see $\Om'=j^*\Om$, where $j^*\Om$ is the pull-back of $\Om$ by $j:\cal{P}\to T^*\bf{M}$.

Fix a coordinate neighborhood $(\cal{U};x^1,x^2,x^3)$ of M and a local section $f:\cal{U}\to P$. For each $e \in \mathcal{M}$, we set an $\mathrm{M}(3;\R)$-valued function $C=(C_{ij})$ on $\cal{U}$ by $f^*e^i=C_{ij}dx^j$ for $i=1,2,3.$ Denote by $^tC$ the transvese matrix of $C$. For $\g\in \bf{M}$, put $\rm{vol}(\g)$ = (the volume element of $\g$). Then by definition $\vol (e)=\vol (\g)$ when $\g=i(e)$.
\begin{lemma}\label{lem:pre1} On the neighborhood $(\cal{U};x^1,x^2,x^3)$, the map (\ref{eq:pre3})
\begin{align*}
j:\cal{P}\to  T^*\bf{M},\quad j(e,S\mathrm{vol}(e))=(\g,\pi)
\end{align*}
is given by
\begin{equation*}
\begin{split}
&(\g_{ij})={}^tCC \quad\text{and}\quad (\pi^{ij})=\frac{1}{2}C^{-1}(f^*S)({}^tC)^{-1}\mathrm{vol}(e),
\end{split}
\end{equation*}
where  $\g=\g_{ij}dx^i dx^j$, $\pi=\pi^{ij}\dfrac{\partial}{\partial x^i}\dfrac{\partial}{\partial x^j}$ and $S\in \Om^0(P;\sym3)^{\so3}$.
\end{lemma}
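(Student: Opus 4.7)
My plan is to prove both formulas by direct computation in the local trivialization given by $f:\cal{U}\to P$. The metric formula is immediate from the solder condition: $(f^*e^1,f^*e^2,f^*e^3)$ is an orthonormal coframe for $\g$ on $\cal{U}$, so
\begin{align*}
\g=\sum_{k}(f^*e^k)\otimes(f^*e^k)=C_{ki}C_{kj}\,dx^idx^j,
\end{align*}
which yields $\g_{ij}=({}^tCC)_{ij}$.

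For the momentum formula I would first compute the differential $i_*$ in coordinates. A horizontal tangent vector at $e$ is encoded by $T\in\Om^0(P;\sym3)^{\so3}$ via the deformation $\dot e^i=T_{ij}e^j$; pulling back by $f$ gives $\dot C=(f^*T)C$, and since $f^*T$ is symmetric,
\begin{align*}
\dot\g={}^t\dot C\cdot C+{}^tC\cdot\dot C=2\,{}^tC(f^*T)C.
\end{align*}
Hence if $i_*(e,T)=(\g,h)$, then $h_{ij}=2({}^tC(f^*T)C)_{ij}$.

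Next I would use the defining pairing $\langle\pi,h\rangle=\int_M\tr(TU)$. Writing $U=S\vol(e)$, the integrand $\tr(TU)=\tr(TS)\vol(e)$ is a basic 3-form on $P$ that descends on $\cal{U}$ to $\tr((f^*T)(f^*S))\vol(\g)$. The pairing therefore reads locally
\begin{align*}
\int_{\cal{U}}2(f^*T)_{kl}(C\pi\,{}^tC)_{kl}=\int_{\cal{U}}(f^*T)_{kl}(f^*S)_{kl}\vol(\g),
\end{align*}
and must hold for every equivariant $T$. Since any compactly supported symmetric-matrix-valued function on $\cal{U}$ arises as $f^*T$ for some such $T$ (extend equivariantly along the fibres and multiply by a bump function pulled back from $M$), this integral identity forces the pointwise matrix equation $2C\pi\,{}^tC=(f^*S)\vol(e)$. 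Inverting the sandwich gives $\pi=\frac{1}{2}C^{-1}(f^*S)({}^tC)^{-1}\vol(e)$, as claimed. The only place where care is needed is the passage from the integral pairing to this pointwise equation, which rests on the bump-function extension argument just described.
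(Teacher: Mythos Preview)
Your proof is correct and follows essentially the same approach as the paper: both compute $i_*$ in coordinates to get $(\rho_{ij})=2\,{}^tC(f^*T)C$, then use the defining pairing $\langle\pi,\rho\rangle=\int_M\mathrm{tr}(TS)\mathrm{vol}(e)$ to read off $\pi$. The paper is slightly terser---it rewrites $\mathrm{tr}(TS)$ directly in terms of $\rho$ and suppresses the bump-function justification that you spell out explicitly---but the argument is the same.
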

\begin{proof}
By the derivation of $s^*e^i=C_{ij}dx^j$, we see that
\begin{align*}
    i_{*}:H\cal{M} \to T\bf{M}, \quad i_{*}(e,T)=(\g,\rho)
\end{align*}
is given by $(\g_{ij})={}^tC C$ and $(\rho_{ij})=2{}^tC(f^*T) C$. Here $\g=\g_{ij}dx^idx^j$ and $\rho=\rho_{ij}dx^idx^j$ on $(\cal{U};x^1,x^2,x^3)$. Thus by the definition of $j$ (\ref{eq:pre3}),
\begin{align*}
     \langle \pi,\rho\rangle =\int_{M}\mathrm{tr}(TS)\mathrm{vol}(e)=\int_{M}\rm{tr}\left(\frac{1}{2}C^{-1}(f^*S)({}^tC)^{-1}(\r_{ij})\right)\rm{vol}(e)
\end{align*}
 for every $(\g,\r)=i_{*}(e,T)$. Hence $(\pi_{ij})=\frac{1}{2}C^{-1}(f^*S)({}^tC)^{-1}\rm{vol}(e)$.
\end{proof}

\subsection{$\su3$- and $G_2$-structures}
In this subsection we review briefly some results in \cite{Chi}. For general properties of $\su3$- and $G_2$-structures, see for example \cite{BS, CLSS, CS, Joy}.

In general, a {\it G-structure} on an $n$-manifold $N$ is a subbundle of the frame bundle $\mathrm{Fr}(N)$ with structure group contained in $G \subset \gl{n}$.
We look at $\su3$-structures on a 6-manifold $X$, where $\su3 \subset \gl6$ is embedded in the natural way. Then an $\su3$-structure can be identified with a certain pair $(\om, \psi)$ of a 2-form $\om$ and a 3-form $\psi$.
%By the natural inclusion $\su3\subset\gl{6}$, an $\su3$-structure $Q$ on $X$ is defined. The structure $Q$ is identified with the pair $(\o,\psi)$ of 2-form $\o$ and 3-form $\psi$ on $X$ associated with $Q$. 
An $\su3$-structure $(\o,\psi)$ is called {\it half-flat} if $d(\o\w\o)=0$ and $d\psi=0$;  and {\it torsion-free} if $d\o=0$ and $d\psi=d\hat{\psi}=0$. Here $\hat{\psi}$ is the dual of $\psi$ \cite{Hit}. If $(\o,\psi)$ is torsion-free, then the associated Riemannian metric on $X$ is Ricci-flat. 

Next we look at $G_2$-structure on a 7-manifold $Y$, where the Lie group $G_2 \subset \gl7$ is embedded as the automorphism group of the cross product on $\R^7$. Then a $G_2$-structure can be identified with a certain 3-form $\phi$.
%By the automorphism group $G_2$ of the cross product on $\R^7$, a $G_2$-structure $R$ on $Y$ is defined. The structure $R$ is identified with the 3-form $\phi$ associated with $R$. 
A $G_2$-structure $\phi$ is called {\it torsion-free} if $d\phi=0$ and $d\star\phi=0$, where $\star$ is the Hodge star naturally defined by $\phi$. If $\phi$ is torsion-free, then the associated Riemannian metric on $Y$ is Ricci-flat.

Let $\psym3$ and $\Om^3(P;\psym3)^{\so3}_{hor}$ be the positive-definite subsets of $\sym3$ and $\Om^3(P;\sym3)^{\so3}_{hor}$ with respect to the orientation of $M$. Put 
\begin{align*}
    \cal{P}^{+}:=\cal{M}\times \Om^3(P;\psym3)^{\so3}_{hor} \subset \cal{P}.
\end{align*}
Let $\cal{A}\subset \Om^1(P;\fso3)^{\so3}$ be the space of connection 1-forms on the  principle bundle $P=M\times \so3$. Fix a basis $\{Y_1,Y_2,Y_3\}$ of $\fso3$ with $[Y_i,Y_j]=\e_{ijk}Y_{k}$ for $i,j=1,2,3$. Here $\e_{ijk}$ is Levi-Civita's symbol. By the basis, we write a connection 1-form $a \in \cal{A}$ as $a=a^iY_i$. 
The following $\so3$-invariant $\su3$-structures were studied in \cite{Chi}. 
%(The following is the case of $G=\so3$ in  \cite{Chi}, Definition 3.1.)
\begin{definition}[\cite{Chi}, Definition 3.1]\label{def:pre1}
An $\su3$-structure $(\omega,\psi)$ on $P$ is said to be  {\it $\so3$-invariant (non-integrable) special Lagrangian fibered $\su3$-structure} if it satisfies:
\begin{itemize}
\item $\omega$ and $\psi$ are invariant for the right action of $\so3$ on $P$;
\item restrictions of $\omega$ and $\psi$ to fibers $F_m \subset P$ vanish:  $\omega|_{F_m}=0$ and $\psi|_{F_m}=0$ for all $m \in M$.
\end{itemize}
\end{definition}
Let $(e,S\rm{vol}(e), a) \in \cal{P}^{+}\times \cal{A}$. Here $S \in \Om^0(P;\psym3)^{\so3}$.
The space $\mathcal{P}^{+}\times \cal{A}$ is related to $\su3$-structurs in Definition \ref{def:pre1} as follows:

\begin{prop}[\cite{Chi}, Example 3.3 and Theorem 3.5]\label{prop:pre3}
One-to-one correspondence  between $\su3$-structures $(\o,\psi)$ in Definition \ref{def:pre1} and the triples $(e,S\rm{vol}(e), a)\in \cal{P}^{+}\times \cal{A}$ is given by
\begin{align*}
&\om=\dets^{-\frac{1}{2}}\tilde{S}_{ij}a^i\wedge e^j,\\
&\psi=-(\det S)e^1\wedge e^2 \wedge e^3 + e^1\wedge a^2\wedge a^3+e^2\wedge a^3 \wedge a^1 + e^3\wedge a^1 \wedge a^2, 
\end{align*}
where $\tilde{S}$ is the adjugate matrix of $S$.
\end{prop}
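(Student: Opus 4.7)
The plan is to establish the bijection by constructing inverse maps in both directions. The forward map is given by the stated formulas, so the task is to check that for every $(e,S\mathrm{vol}(e),a)\in\cal{P}^{+}\times\cal{A}$ the resulting pair $(\omega,\psi)$ is pointwise an $\su3$-structure, is $\so3$-invariant, and satisfies the fiber-vanishing condition of Definition \ref{def:pre1}. The inverse direction then recovers $(e,S\mathrm{vol}(e),a)$ from $(\omega,\psi)$ and simultaneously gives uniqueness.

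For the pointwise $\su3$-condition, the verification is algebraic at each $u\in P$. Using the $\so3$-equivariance of $S$, I would first act by $\so3$ on the frame to diagonalize $S_u=\operatorname{diag}(s_1,s_2,s_3)$ with $s_i>0$; then $\tilde S_u=\operatorname{diag}(s_2s_3,s_1s_3,s_1s_2)$ and $\det S_u=s_1s_2s_3$. The positive rescaling $\hat e^i:=\sqrt{s_js_k}\,e^i$, $\hat a^i:=a^i/\sqrt{s_i}$, with $(i,j,k)$ a cyclic permutation of $(1,2,3)$, transforms the stated formulas into the standard $\su3$-model
\begin{align*}
\omega_0=\sum_{i=1}^{3}\hat a^i\we \hat e^i,\qquad \psi_0=-\hat e^1\we\hat e^2\we\hat e^3+\sum_{(i,j,k)\text{ cyclic}}\hat e^i\we \hat a^j\we\hat a^k,
\end{align*}
which coincides with $\operatorname{Re}\bigl(-(\hat e^1+\sqrt{-1}\hat a^1)\we(\hat e^2+\sqrt{-1}\hat a^2)\we(\hat e^3+\sqrt{-1}\hat a^3)\bigr)$. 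Thus $(\omega_u,\psi_u)$ is $\su3$-stable in Hitchin's sense, and the compatibility identities $\omega\we\psi=0$ and $\omega^3=\tfrac{3}{2}\psi\we\hat\psi$ follow automatically from the standard model.

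For $\so3$-invariance and fiber-vanishing, every monomial in the formulas is an $\so3$-scalar built by contracting matched indices of the equivariant objects $S$, $\tilde S$, $e$ and $a$, and $\det S$ is an $\so3$-invariant function, so both $\omega$ and $\psi$ are $\so3$-invariant on $P$. Fiber-vanishing is immediate because every term of $\omega$ and $\psi$ contains a factor $e^i$, which vanishes on any vertical vector by horizontality of $e$. For the inverse direction, given $(\omega,\psi)$ as in Definition \ref{def:pre1}, the vertical distribution $V\subset TP$ of the fibration $P\to M$ is Lagrangian since $\omega|_{F_m}=0$, and the almost-complex structure $J$ determined by $(\omega,\psi)$ yields a canonical horizontal complement $H:=JV$; this defines a unique connection $1$-form $a\in\cal{A}$. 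The pair $(e,S\mathrm{vol}(e))$ is then recovered by expanding $(\omega,\psi)$ in the bases dual to $\{Y_i\}$ and to a chosen horizontal coframe, reading off the purely horizontal coefficient $-\det S$ in $\psi$ to fix the normalization.

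The main technical point is that the prefactor $\dets^{-1/2}$ and the adjugate $\tilde S$ appearing in the formula for $\omega$ are precisely what is required so that the pointwise rescaling above produces the standard $\su3$-model with no residual scalar factors; I expect this bookkeeping to be the only nontrivial step. Once it is confirmed, the stability of $\psi$, the existence of its Hitchin-dual $\hat\psi$, and the uniqueness of the triple $(e,S\mathrm{vol}(e),a)$ all follow formally.
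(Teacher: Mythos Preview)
The paper does not prove this proposition: it is quoted from \cite{Chi} (Example 3.3 and Theorem 3.5) as background material, so there is no ``paper's own proof'' to compare against. Your outline is therefore being judged on its own merits rather than against anything in the present paper.

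As a sketch, your forward direction is sound. The pointwise diagonalization of $S$ by the $\so3$-action and the rescaling $\hat e^i=\sqrt{s_js_k}\,e^i$, $\hat a^i=a^i/\sqrt{s_i}$ do exactly what you claim: a direct check shows both $\omega$ and $\psi$ reduce to the standard $\su3$ model, and the invariance and fiber-vanishing arguments are correct.

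The inverse direction is where your write-up is thin. Defining the horizontal distribution by $H:=JV$ is the right idea, but you should say why $H$ is $\so3$-invariant (it is, because $J$ and $V$ are), since otherwise it does not define a connection $a\in\cal{A}$. More importantly, your recovery of $e$ is underspecified: ``expanding in a chosen horizontal coframe'' does not single out $e$ among all solder forms. The clean way is to note that once $a$ is fixed, the $(1,2)$-component of $\psi$ with respect to the horizontal/vertical bigrading is $\sum_{(i,j,k)\ \text{cyclic}} e^i\wedge a^j\wedge a^k$, so $e^i$ is the horizontal $1$-form appearing as the coefficient of $a^j\wedge a^k$; this determines $e$ uniquely. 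Then $\det S$ is read off from the purely horizontal part of $\psi$, and $\tilde S$ (hence $S$) from $\omega$. With those two clarifications your argument would be complete.
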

Note that $\tilde{B}=(\tilde{B}_{ij})$, $\tilde{B}_{ij}=\dfrac{1}{2}\e_{i\a\b}\e_{j\g\d}B_{\g\a}B_{\d\b}$ for $B \in \mathrm{M}(3;\R)$, and $B\tilde{B}=\det{B}\cdot I$. Here $I=(\d_{ij})$ is the identity matrix. Denote by $\det{B}$ and $\rm{tr}(B)$ the determinant and trace of $B$. For $S\in \Om^0(P;\sym3)^{\so3}$, define $S_{ij;k}$ by $(d_HS)_{ij}:=(dS+[a,S])_{ij}=S_{ij;\a}e^{\a}$ for  $i,j,k=1,2,3.$ A connection $a$ is called the {\it Levi-Civita connection} of $e$ if $d_He:=de+a\w e=0$, and denoted by $a_{LC}$.

\begin{prop}[\cite{Chi}, Corollary 6.5]
An $\su3$-structure $(\o,\psi)$ in Definition \ref{def:pre1} is half-flat if and only if $(e,S\rm{vol}(e),a)$ determined by Proposition \ref{prop:pre3} satisfies
\begin{align*}
    a=a_{LC}\quad\text{and}\quad S_{i\a;\a}=0 \quad \text{for}\quad i=1,2,3.
\end{align*}
\end{prop}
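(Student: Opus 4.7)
The plan is to compute $d\psi$ and $d(\o\we\o)$ directly from the formulas of Proposition \ref{prop:pre3}, expand everything in the $\so3$-equivariant coframe $\{e^1,e^2,e^3,a^1,a^2,a^3\}$ of $P$, and match coefficients in each bidegree $(p,q)$ in $(e,a)$. The sufficient direction (from $a=a_{LC}$ and $S_{i\a;\a}=0$ to half-flatness) is a direct substitution; the substance is in the necessary direction.

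For $d\psi=0$, I first introduce the torsion $T^i:=de^i+\e_{ijk}a^j\we e^k$ and the curvature $F^i:=da^i+\tfrac12\e_{ijk}a^j\we a^k$ of $a$, so that $a=a_{LC}$ is equivalent to $T^i=0$. The term $-\dets\,e^1\we e^2\we e^3$ of $\psi$ is automatically closed: $\det S$ is $\so3$-invariant, so $d(\det S)$ is horizontal and $d(\det S)\we e^{123}$ is a $(4,0)$-form that vanishes because the horizontal subbundle has rank $3$; similarly $d(e^{123})=0$ by antisymmetry of $a\in\fso3$ (this is the usual fact $d\,\vol=0$). Hence $d\psi$ is entirely the differential of the $\sum_{\text{cyc}}e^i\we a^j\we a^k$ part, and decomposes by bidegree into $(4,0),(3,1),(2,2),(1,3)$ pieces. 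The $(2,2)$-piece is $\sum_{\text{cyc}}T^i\we a^j\we a^k$, whose three summands lie in linearly independent subspaces of $\we^4 T^*P$, so vanishing forces $T^i=0$, i.e., $a=a_{LC}$. The $(1,3)$-piece collapses by the Jacobi identity in $\fso3$, and the $(3,1)$-piece reduces, under $T^i=0$, to the symmetry of the matrix $G^j_i:=\tfrac12\e_{i\a\b}F^j_{\a\b}$, which is the first Bianchi identity for the Riemann tensor of $a_{LC}$ and hence automatic. Thus $d\psi=0$ is equivalent to $a=a_{LC}$, with no constraint on $S$.

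For $d(\o\we\o)=0$, using $\tilde S=\dets\,S^{-1}$ and the cofactor identity $S\tilde S=\dets\,I$, one expands $\o\we\o$ and observes that $d(\o\we\o)=2\,d\o\we\o$ lives in bidegrees $(3,2)$ and $(2,3)$. Imposing $a=a_{LC}$ from the previous step, the $(2,3)$-piece cancels because the vertical derivative of $\tilde S$ is controlled by the adjoint action $[a,\tilde S]$ via the $\so3$-equivariance of $S$. The $(3,2)$-piece, after using the cofactor identity together with the Bianchi identity for $a_{LC}$, reduces to a scalar multiple of $S_{i\a;\a}$ wedged with a pointwise nonvanishing $5$-form; its vanishing is therefore exactly $S_{i\a;\a}=0$ for $i=1,2,3$.

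The hardest step is the algebraic bookkeeping required to show that the $(1,3)$ and $(3,1)$ pieces of $d\psi$ genuinely cancel by Jacobi and Bianchi (without imposing any hidden extra condition beyond $T^i=0$), and to follow the cofactor manipulations in $d(\o\we\o)$ so that the $(3,2)$-piece really collapses to the divergence equation $S_{i\a;\a}=0$ alone, rather than to a more complicated coupled system in $S$ and the curvature of $a_{LC}$.
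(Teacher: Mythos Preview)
The paper does not supply its own proof of this proposition: it is quoted verbatim as Corollary~6.5 of the reference \cite{Chi}, so there is nothing here to compare your argument against. That said, your direct computation in the equivariant coframe $\{e^i,a^j\}$ is the natural route and is essentially correct.

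One simplification is worth pointing out. Before differentiating, the cofactor identity $\tilde{\tilde S}=(\det S)\,S$ collapses $\omega\wedge\omega$ to
\[
\omega\wedge\omega \;=\; -2\,S_{mn}\,\hat a^{m}\wedge\hat e^{n},
\]
so the $(3,2)$-component of $d(\omega\wedge\omega)$ is simply $-2\,S_{mn;n}\,e^{123}\wedge\hat a^{m}$, whose vanishing is exactly $S_{i\alpha;\alpha}=0$. No Bianchi identity is needed at this step: the curvature contribution $F^\alpha\wedge a^\beta\wedge\hat e^{n}$ would have horizontal degree~$4$ and hence vanishes for rank reasons. Your invocation of the first Bianchi identity for the $(3,1)$-piece of $d\psi$ (the symmetry of $G_{ij}$ once $a=a_{LC}$) is, by contrast, genuinely required and correctly placed.
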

Let $\cal{P}^+$ embed into $\cal{P}^{+}\times \cal{A}$ by 
\begin{align*}
  \cal{P}^{+}\ni (e,S\rm{vol}(e)) \mapsto (e,S\rm{vol}(e),a_{LC}) \in \cal{P}^{+}\times \cal{A}.
\end{align*}
Take a curve $c(t)=(e(t),S(t)\rm{vol}(e(t)),a_{LC}(t))$ in $\cal{P}^{+}\subset \cal{P}^{+}\times \cal{A}$ defined on an open interval  $(t_1,t_2)$. Then $c(t)$ defines a $G_2$-structure $\phi$ on $P\times (t_1,t_2)$ by
\begin{align}\label{eq:pre8}
 \phi= \dets^{\frac{1}{2}}\om(t)\wedge dt + \psi(t),
 \end{align}
where $(\o(t),\psi(t))$ is the $\su3$-structure deternined by $c(t)$ for each $t$. Note that we set $f=\dets^{\frac{1}{2}}$ of $\phi= f(t)\om(t)\wedge dt + \psi(t)$ in (\cite{Chi}, Subsection 6.2).
Define $G=(G_{ij})$ by $d_{H}a=G_{\a\b}\hat{e}^{\b}Y_{\a}$, where $d_Ha:=a +\dfrac{1}{2}[a\w a]$ and $\hat{e}^i=\dfrac{1}{2}\e_{i\a\b}e^{\a}\w e^{\b}$ for $i=1,2,3$. If $a=a_{LC}$, then $G_{ij}$ coincides with the orthonormal representation of the Einstein tensor of $\g=i(e)$ by local coframes $\{e^1,e^2,e^3\}$. The torsion-free conditions for $\phi$ is described by a constrained dynamical system on $\cal{P}^{+}$ as follows. (Note that $G$ is $\Om$ in \cite{Chi})
\begin{prop}[\cite{Chi}, Theorem 6.11]\label{prop:pre5}
The $G_2$-structure $\phi$ (\ref{eq:pre8}) is torsion-free if and only if $c(t) \in \cal{P}^{+}$ satisfies
\begin{align}
   \label{eq:pre9} &S_{i\a;\a}=0;\\
   \label{eq:pre10} &\pt{e^i}{t}=\tilde{S}_{i\a}e^{\a},\quad \pt{S}{t}=-G-\mathrm{tr}(\tilde{S})S +2\dets I 
\end{align}
for all $t\in (t_1,t_2)$ and for $i=1,2,3$.
\end{prop}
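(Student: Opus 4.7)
The plan is to decompose the torsion-free conditions $d\phi=0$ and $d(\star\phi)=0$ on $P\times(t_1,t_2)$ into spatial and temporal parts via the splitting $d=d_P+dt\we\pt{}{t}$, where $d_P$ denotes exterior derivative along $P$. A standard computation for a $G_2$-structure arising from a one-parameter family of $\su3$-structures on a hypersurface with lapse $N$ gives
\begin{align*}
\phi=N\,\om\we dt+\psi,\qquad \star\phi=\tfrac12\om\we\om-N\,\hat\psi\we dt,
\end{align*}
with $N=\dets^{\frac12}$ in our setting. Equating the purely spatial and the $dt$-components of $d\phi=0$ and $d(\star\phi)=0$ to zero yields the four conditions $d_P\psi=0$, $d_P(\om\we\om)=0$, $d_P(N\om)=\pt{\psi}{t}$ and $d_P(N\hat\psi)=\om\we\pt{\om}{t}$.

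By the preceding proposition, the first two are precisely the half-flat conditions for the instantaneous $\su3$-structure $(\om(t),\psi(t))$ and are equivalent to $a(t)=a_{LC}(t)$ together with $S_{i\a;\a}(t)=0$. Since our curve $c(t)$ lies in the Levi--Civita slice $\cal{P}^{+}\hookrightarrow \cal{P}^{+}\times\cal{A}$, the former holds automatically, and what remains is exactly \eqref{eq:pre9}. For the remaining two equations I would parametrize $\pt{e^i}{t}=T_{i\a}e^{\a}$ for some $T\in\Om^0(P;\mathrm{M}(3;\R))^{\so3}$ and expand $\pt{\psi}{t}$ from the formula $\psi=-\dets\,e^1\we e^2\we e^3+\tfrac12\e_{ijk}e^i\we a^j\we a^k$, while computing $d_P(N\om)$ from $\om=\dets^{-\frac12}\tilde S_{ij}\,a^i\we e^j$ together with the structural identities $d_Pe=-a_{LC}\we e$ and $d_Pa_{LC}+\tfrac12[a_{LC}\we a_{LC}]=G_{\a\b}\hat e^{\b}Y_{\a}$. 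Matching the resulting expressions coefficient by coefficient in the frame $\{e^i\we a^j\we a^k,\,e^i\we e^j\we e^k,\ldots\}$ should force $T=\tilde S$, giving the first half of \eqref{eq:pre10}; a further match, combined with the $3\times3$ matrix identities $S\tilde S=\dets I$ and Cayley--Hamilton, should collapse the remaining terms to $\pt{S}{t}=-G-\tr(\tilde S)S+2\dets I$.

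Finally, I would verify that the fourth equation $d_P(N\hat\psi)=\om\we\pt{\om}{t}$ is a consequence of the half-flat constraint, the two evolution equations, and the contracted second Bianchi identity for $G$ (which holds when $a=a_{LC}$); this redundancy is expected because the $G_2$ torsion-free system overdetermines the $\su3$-evolution by exactly the amount corresponding to gauge freedom. The principal obstacle will be the coefficient bookkeeping in the middle step: one must separate the symmetric part of $T$ (which contributes to the horizontal direction in $\cal{P}^{+}$) from its antisymmetric part (absorbed by the gauge group $\cal{G}$), keep track of $\so3$-equivariance of every factor, correctly differentiate $\dets$ and $\tilde S$ along the flow, and recognise the precise algebraic combinations in $S$, $\tilde S$ and $G$ that collapse via Cayley--Hamilton to the compact right-hand side of the second evolution equation in \eqref{eq:pre10}.
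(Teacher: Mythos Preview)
The paper does not prove this proposition; it is quoted verbatim as Theorem~6.11 of the author's earlier work \cite{Chi} and used here as a black box. There is therefore no proof in the present paper against which to compare your proposal.

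That said, your sketch follows the standard Hitchin-flow template (split $d=d_P+dt\we\partial_t$, reduce $d\phi=0$ and $d\star\phi=0$ to half-flatness plus two evolution equations, then expand in the frame), and this is almost certainly the route taken in \cite{Chi}. One point deserves caution, however. Your final claim that the fourth condition $d_P(N\hat\psi)=\om\we\partial_t\om$ is a \emph{consequence} of the half-flat constraint, the two evolution equations and the contracted Bianchi identity is not obviously correct, and is likely backwards. In Hitchin's formulation the two evolution equations---for $\psi$ and for $\tfrac12\om^2$---are typically independent components of the torsion, and both are needed to determine the pair $(\partial_t e,\partial_t S)$: the equation $d_P(N\om)=\partial_t\psi$ alone does not see enough of $\partial_t S$, since $\psi$ depends on $S$ only through $\det S$. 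You should expect to extract part of \eqref{eq:pre10} from the fourth equation rather than discard it. The ``overdetermination'' you have in mind is not that one evolution equation implies the other; it is that the constraint \eqref{eq:pre9} is preserved along the flow generated by \eqref{eq:pre10}---which is exactly the content of Proposition~\ref{prop:con2} later in the paper.
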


Finally, we remark $\cal{G}$-invariance of equations in Proposition \ref{prop:pre5}. We may restrict the action of $\cal{G}$ on $\cal{P}$ to $\cal{P}^+$. Then by the forms of equations in Proposition \ref{prop:pre5}, we see that if $c(t)=(e(t),S(t)\rm{vol}(e(t)))$ is a solution then so is $c(t)\cdot \tau=(\tau^*e(t),\tau^*S(t)\rm{vol}(\tau^*e(t)))$ for all $\tau\in \cal{G}$.

%%%%%%%%%%%%%%%%%%%%%%%%%%%%%%%%%%%%%%%%%%%%%%%%%%%%%%%
\section{The constrained Hamiltonian dynamical system}\label{sec:con}
%%%%%%%%%%%%%%%%%%%%%%%%%%%%%%%%%%%%%%%%%%%%%%%%%%%%%
Let $\Om^3(M,S^2_{+}TM)$ be the subspace of positive-definite elements of $\Om^3(M,S^2TM)$ with respect to the orientation of $M$. Put
\begin{align}\no
    T^*\bf{M}^{+}:=\bf{M}\times\Om^3(M,S_{+}^2TM) \subset T^*\bf{M}.
\end{align}
Then the principal $\cal{G}$-bundle $j: \cal{P}\to T^*\bf{M}$ restricts to $j:\cal{P}^+\to T^*\bf{M}^{+}$. 
Moreover the connection on $\cal{P}$ also restricts to $\cal{P}^{+}$. For a curve $\un{c}(t)=(\g(t),\pi(t))$ in $T^*\bf{M}^+$, a curve $c(t)$ in $\cal{P}^{+}$ is called a {\it lift} of $\un{c}(t)$ if it satisfies $j(c(t))=\un{c}(t)$ and $\dot{c}(t)\in H_{c(t)}\cal{P}^+$. Here $\dot{c}(t)$ denotes the derivative of $c(t)$ at $t$, and $H\cal{P}^+$ the horizontal distribution on $\cal{P}^{+}$. %By the parallelizability of $M$, the Gram-Schmidt orthonormalization and Lemma \ref{lem:pre1}, 
We can construct a lift $c(t)$ of $\un{c}(t)$ for every curve $\un{c}(t)$ in $T^*\bf{M}$. (For example, by a fixed global coframe on $M$, the Gram-Schmidt process and Lemma \ref{lem:pre1}, we construct a curve $c'(t)$ in $\cal{P}$ with $j(c'(t))=\un{c}(t)$, and determine $\tau(t)\in \cal{G}$ such that $c'(t)\cdot \tau(t)$ is horizontal by solving an ODE on $\cal{G}$. The ODE is a smooth family of ODEs on $\so3$ in form $\dot{h}h^{-1}=A(t)$, where $h \in \so3$ and $A(t) \in \fso3$.)
%The existence of global solutions and smooth dependence for changes of the  right-hand sides are proved by applying the argument showing the existence of horizontal lifts on finite-dimensional principal bundles. 
A lift $c(t)$ gives a $G_2$-structure $\phi$ by (\ref{eq:pre8}). Note that any two lifts $c(t)$ and $c'(t)$ of $\un{c}(t)$ satisfy $c'(t)=c(t)\cdot\tau$ for some $\tau\in \cal{G}$. Then  $\su3$-structures $(\o,\psi)$ and $G_2$-structures $\phi$ corresponding to $c(t)$ and $c'(t)$ are isomorphic by $\tau: P\to P$.

Define a Hamiltonian function $H$ on $(T^*\bf{M},\Om)$  by
\begin{align}\label{eq:con12}
    H(\ga,\pi)= -\frac{1}{2}\int_{M}\mathrm{R}(\ga)\mathrm{vol}(\ga) + 8\int_{M}\det{(\pi^{i}_j)}\mathrm{vol}(\ga)^{-2} \quad\text{for}\quad (\ga,\pi)\in T^*\mathbf{M}.
\end{align}
Here $\rm{R}(\g)$ denotes the scalar curvature of $\g\in \bf{M}.$ We formulate our result by $H$ , which is different from $H_{G_2}$ mentioned in Introduction. This simplifies the proof of our main result in Section \ref{sec:pro}. The dynamical systems of $H$ and $H_{G_2}$ are mutually transformed  by scaling (See Remark \ref{rem:prop1}).
Let $\nabla$ be the Levi-Civita connection for $\g$.
The main results in the paper are as follows.

\begin{theorem}\label{thm:con1}
Let $\un{c}(t)=(\g(t),\pi(t))$ be a curve in $T^*\bf{M}^+$ defined on $(t_1,t_2)$. Then a lift $c(t)$ of $\un{c}(t)$ gives a torsion-free $G_2$-structure $\phi$ by (\ref{eq:pre8}) if and only if $\un{c}(t)$ satisfies the following:
\begin{itemize}
\item $\un{c}(t)$ is a solution of the Hamiltonian dynamical system of $H$.
\item $\nabla_{\a}\pi^{i\a}(t)=0$ holds for $i=1,2,3$ and for all $t\in (t_1,t_2)$
\end{itemize}
\end{theorem}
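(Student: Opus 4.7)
The plan is to transfer the characterization of torsion-free $G_2$-structures given by Proposition \ref{prop:pre5} upstairs on $\cal{P}^+$ to conditions on $\un{c}(t)=(\g(t),\pi(t))$ downstairs in $T^*\bf{M}^+$, via the principal $\cal{G}$-bundle $j:\cal{P}^+\to T^*\bf{M}^+$ and its connection. Since any two lifts of $\un{c}(t)$ are related by a gauge transformation, and both the torsion-free condition and the Hamilton and divergence conditions are $\cal{G}$-invariant (as noted in the remark following Proposition \ref{prop:pre5}), it is enough to fix one horizontal lift $c(t)=(e(t),S(t)\vol(e(t)))$ and work with its local coordinate expression from Lemma \ref{lem:pre1}.

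First I would translate the constraint (\ref{eq:pre9}), $S_{i\a;\a}=0$, into the divergence condition $\nabla_\a\pi^{i\a}=0$. The $\so3$-equivariance of $S$ together with the Levi-Civita assumption $a=a_{LC}$ identifies $S_{ij;k}$ in the orthonormal coframe $(e^1,e^2,e^3)$ with the covariant derivative of the symmetric bi-vector density encoded by $\pi$; contracting the last two indices converts (\ref{eq:pre9}) into the stated divergence condition on $\pi$.

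Next, horizontality of the lift means that $T=(T_{ij})$ defined by $\dot{e}^i=T_{i\a}e^\a$ takes values in $\sym3$; the first evolution equation in (\ref{eq:pre10}) then forces $T=\tilde{S}$, which is indeed symmetric. Differentiating $\g_{ij}={}^tCC$ gives $\dot{\g}_{ij}=2({}^tC\tilde{S}C)_{ij}$. On the other hand, the variational derivative $\delta H/\delta\pi$ of the kinetic term $8\int\det(\pi^i_j)\vol(\g)^{-2}$ produces a multiple of the adjugate of $\pi^i_j$ divided by $\vol(\g)^2$; translated through Lemma \ref{lem:pre1}, this is precisely $2\tilde{S}$ in the orthonormal frame, so the first equation in (\ref{eq:pre10}) matches $\dot{\g}=\delta H/\delta\pi$.

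The main work is to match the second equation in (\ref{eq:pre10}), $\dot{S}=-G-\tr(\tilde{S})S+2\dets I$, with $\dot{\pi}=-\delta H/\delta\g$. Two pieces contribute to $\delta H/\delta\g$: the gravitational term $-\frac{1}{2}\int R(\g)\vol(\g)$ yields the Einstein tensor of $\g$, which in the frame $\{e^i\}$ coincides with $G_{ij}$ by the remark recorded just before Proposition \ref{prop:pre5}; the kinetic term, combined with the chain rule applied to $\pi^i_j=\g_{jk}\pi^{ki}$ and to $\vol(\g)^{-2}$ (using $\partial\det M/\partial M=(\det M)M^{-1}$ and $\delta\vol(\g)=\frac12\g^{ij}\delta\g_{ij}\vol(\g)$), produces two further terms that, after translation via Lemma \ref{lem:pre1}, become $-\tr(\tilde{S})S$ and $+2\dets I$. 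The main obstacle is the bookkeeping in this last step: the density weights in $\pi^{ij}=\frac12 C^{-1}(f^*S)({}^tC)^{-1}\vol(e)$ produce powers of $\det C$ that have to cancel with those in $\vol(\g)^{-2}$, and $\dot{\pi}$ acquires an extra contribution from $\partial_t\vol(e)=\tr(\tilde{S})\vol(e)$ (induced by $\dot{e}^i=\tilde{S}_{i\a}e^\a$) that combines with $\dot S$ to assemble into the three-term right-hand side of (\ref{eq:pre10}); the coefficients $\frac12$ and $8$ in the definition of $H$ are precisely what is needed to make these computations close. The elementary variation formulas assembled in Section \ref{sec:var} should then complete the identification.
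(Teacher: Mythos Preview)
Your strategy is correct and follows the same high-level outline as the paper: reduce to Proposition~\ref{prop:pre5}, identify the constraint~(\ref{eq:pre9}) with the divergence condition on $\pi$, and show that the evolution equations~(\ref{eq:pre10}) are precisely Hamilton's equations for $H$. Where you diverge is in the computation of this last step. You propose to compute $\delta H/\delta\pi$ and $\delta H/\delta\g$ directly on $T^*\bf{M}$ (using the standard variation of the Einstein--Hilbert action and the determinant, with careful tracking of the density weights in $\pi^{ij}=\tfrac12 C^{-1}(f^*S)({}^tC)^{-1}\vol(e)$), and then push everything through the coordinate formula of Lemma~\ref{lem:pre1}. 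The paper instead avoids coordinates on $T^*\bf{M}$ entirely: it observes that $\Om'=j^*\Om$, so the horizontal vector field $X^{H'}$ of $H'=j^*H$ on $\cal{P}$ is automatically the lift of the Hamiltonian vector field $X^H$; it then computes $dH_1'(Z)$ and $dH_2'(Z)$ for an arbitrary horizontal $Z=(T,V\vol(e))$ using the variation formulas of Lemma~\ref{lem:var3}, reads off $X^{H'}$ from the explicit form of $\Om'$, and checks it agrees with the vector field $X^s$ encoding~(\ref{eq:pre10}). Your route works, but the bookkeeping you anticipate (powers of $\det C$, the extra $\tr(\tilde{S})\vol(e)$ contribution to $\dot\pi$, matching the coefficients $\tfrac12$ and $8$) is exactly what the paper sidesteps by staying upstairs in the orthonormal frame; the formulas in Section~\ref{sec:var} are tailored to that setting rather than to the coordinate computation you outline.
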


\begin{prop}\label{prop:con2}
Let $\un{c}(t)$ be an orbit of the Hamiltonian dynamical system of $H$. Assume that  $\nabla_{j}\pi^{ij}(t_0)=0$ holds for $i=1,2,3$ at some $t_0$. Then $\nabla_{j}\pi^{ij}(t)=0$ holds for $i=1,2,3$ on the whole orbit.
\end{prop}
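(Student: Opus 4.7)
The plan is to recognize $\nabla_{j}\pi^{ij}=0$ as a first-class (moment-map-type) constraint that generates the natural lift to $T^{*}\mathbf{M}$ of the $\mathrm{Diff}(M)$-action on $\mathbf{M}$, and then to observe that $H$ is manifestly $\mathrm{Diff}(M)$-invariant. Preservation of the constraint along the flow of $H$ follows by the antisymmetry of the Poisson bracket.

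First, I would introduce the smeared constraint. For a vector field $X$ on $M$ with components $X^{i}$ (and $X_{i}=\g_{ij}X^{j}$) define
\begin{align*}
C(X)(\g,\pi)=\int_{M}X_{i}\nabla_{j}\pi^{ij},
\end{align*}
where, recalling that $\pi\in\Om^{3}(M,S^{2}TM)$ already carries the volume form, the integrand is a genuine 3-form. Integration by parts together with the symmetry of $\pi$ rewrites this as
\begin{align*}
C(X)(\g,\pi)=-\tfrac{1}{2}\int_{M}\pi^{ij}(\mathcal{L}_{X}\g)_{ij}.
\end{align*}
From this form one reads off the variational derivatives
\begin{align*}
\frac{\delta C(X)}{\delta\pi^{ij}}=-\tfrac{1}{2}(\mathcal{L}_{X}\g)_{ij},\qquad \frac{\delta C(X)}{\delta\g_{ij}}=\tfrac{1}{2}(\mathcal{L}_{X}\pi)^{ij},
\end{align*}
so that the Hamiltonian vector field of $C(X)$ on $(T^{*}\mathbf{M},\Om)$ is (up to an overall sign convention chosen consistently with the $\Om=-d\theta$ of the paper) precisely the tangent to the flow $(\g,\pi)\mapsto(\phi_{s}^{*}\g,\phi_{s}^{*}\pi)$, where $\phi_{s}$ is the flow of $X$. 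In other words, $C(X)$ is the moment map for the natural cotangent lift of the $\mathrm{Diff}(M)$-action.

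Second, I would verify the $\mathrm{Diff}(M)$-invariance of $H$. The Einstein--Hilbert term $\int_{M}\mathrm{R}(\g)\mathrm{vol}(\g)$ is invariant by standard naturality. For the second term, write it locally as in Lemma \ref{lem:pre1}: with $\pi^{ij}=\tfrac{1}{2}(C^{-1}(f^{*}S)({}^{t}C)^{-1})^{ij}\mathrm{vol}(e)$ and $\mathrm{vol}(\g)=\mathrm{vol}(e)$, one checks directly that
\begin{align*}
\det(\pi^{i}{}_{j})\,\mathrm{vol}(\g)^{-2}
\end{align*}
is a 3-form on $M$ functorially built from the pair $(\g,\pi)$, hence pulls back correctly under diffeomorphisms. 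Concretely, $\det(\pi^{i}{}_{j})=\det(\g_{k\ell}\pi^{\ell j})/\det(\g)$ is a product of a scalar density of weight $3$ by $\mathrm{vol}(\g)^{-2}$, giving a 3-form. Thus $H(\phi_{s}^{*}\g,\phi_{s}^{*}\pi)=H(\g,\pi)$ for every $s$.

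Third, combining the previous two steps, along the flow of $C(X)$ we have $\frac{d}{ds}H=\{H,C(X)\}=0$, so by antisymmetry $\{C(X),H\}=0$ as a function on $T^{*}\mathbf{M}^{+}$. Consequently, if $\un{c}(t)$ is any orbit of the Hamiltonian dynamical system of $H$, then
\begin{align*}
\frac{d}{dt}C(X)(\un{c}(t))=\{C(X),H\}(\un{c}(t))=0,
\end{align*}
so $C(X)$ is constant along $\un{c}(t)$ for every vector field $X$. The hypothesis $\nabla_{j}\pi^{ij}(t_{0})=0$ says $C(X)(\un{c}(t_{0}))=0$ for all $X$; hence $C(X)(\un{c}(t))=0$ for all $t$ and all $X$, which (since $X$ is arbitrary) gives $\nabla_{j}\pi^{ij}(t)=0$ pointwise for all $t$. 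The main technical obstacle in executing the plan is the careful bookkeeping of density weights when computing the variational derivatives of $C(X)$ and the invariance of the determinant term: one must make sure that the factors of $\mathrm{vol}(\g)$ entering $\pi^{ij}$, $\det(\pi^{i}{}_{j})$ and $\mathrm{vol}(\g)^{-2}$ combine into honest 3-forms, so that both the moment-map identification and the $\mathrm{Diff}(M)$-invariance of $H$ hold without stray density corrections.
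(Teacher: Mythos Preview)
Your argument is correct and takes a genuinely different route from the paper's own proof. The paper lifts the orbit to $\cal{P}$ via the connection on $j:\cal{P}\to T^*\bf{M}$, uses the explicit flow equations (\ref{eq:pre10}) together with the variation formulas of Lemmas~\ref{lem:var1} and~\ref{lem:var2}, and computes directly that
\[
\pt{}{t}(S_{i\a;\a})=-\tilde{S}_{\a\a}S_{i\b;\b}-\tilde{S}_{i\a}S_{\a\b;\b},
\]
a homogeneous linear ODE in $(S_{1\a;\a},S_{2\a;\a},S_{3\a;\a})$, whence uniqueness forces the constraint to persist. Your approach instead identifies $C(X)=\int_M X_i\nabla_j\pi^{ij}$ as (a multiple of) the moment map for the cotangent-lifted $\mathrm{Diff}(M)$-action on $T^*\bf{M}$, verifies that both summands of $H$ are natural in $(\g,\pi)$ and hence $\mathrm{Diff}(M)$-invariant, and concludes $\{C(X),H\}=0$ from Noether's theorem. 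This is more conceptual and requires no computation in the moving frame: it applies verbatim to \emph{any} $\mathrm{Diff}(M)$-invariant Hamiltonian on $T^*\bf{M}$ (in particular also to $H_{GR}$). The paper's approach, by contrast, yields the explicit off-constraint evolution (\ref{eq:pro20}), which is additional information your method does not see. The only place where your write-up could be tightened is the explicit formula for $\delta C(X)/\delta\g_{ij}$: since $X_j=\g_{jk}X^k$ and $\nabla$ both depend on $\g$, that derivative is slightly delicate; but you do not actually need it, because the identification of $C(X)$ with the canonical moment map $\langle\pi,\mathcal{L}_X\g\rangle$ on a cotangent bundle already gives the Hamiltonian vector field of $C(X)$ as the cotangent lift of $\mathcal{L}_X$, without any further variation.
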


\begin{remark}\label{rem:con1}
Let $\un{c}(t)$ be a curve in $T^*\bf{M}$ satisfying the conditions in Theorem \ref{thm:con1}, defined on $(t_1,t_2)$. If  $\pi(t)$ is not positive-definite and $\det{(\pi(t))} \neq 0$ for all $t$, then a $G^*_2$-structure is defined on $P\times (t_1,t_2)$ from a lift $c(t)$ of $\un{c}(t)$ by (\ref{eq:pre8}). We conjecture that this $G^*_2$-structure is also torsion-free.  In general, a torsion-free $G_2^*$-structure on a 7-manifold $Y$ defines a Ricci-flat pseudo-Riemannian metric of signature $(3,4)$ on $Y$. See  (\cite{CLSS}, p.\ 122 and Theorem 2.3.) for more details.
\end{remark}

%%%%%%%%%%%%%%%%%%%%%%%%%%%%%%%%%%%%%%%%%%%%%%%%%%%%
\section{Variation formulas}\label{sec:var}
%%%%%%%%%%%%%%%%%%%%%%%%%%%%%%%%%%%%%%%%%%%%%%%%%%%
In order to prove Theorem \ref{thm:con1} and \ref{prop:con2}, we prepare some formulas. Identify $\Om^p(P;\R^3)$ with $\Om^p(P;\fso3)$ by the basis $\{Y_1,Y_2,Y_3\}$ of $\fso3$. For given $e\in \cal{M}$, $a\in \cal{A}$ and $A=(A_{ij})\in \Om^0(P;\mathrm{M}(3;\R))^{\so3}$, define $A_{ij;k}$ by $(d_HA)_{ij}:=(dA+[a,A])_{ij}=A_{ij;\a}e^{\a}$ for $i,j=1,2,3.$ Denote by $\dot{\s}$ the derivative $\dfrac{\partial \s}{\partial t}$ of a differential form $\s$.
Moreover put $\hat{e}^i=\dfrac{1}{2}\e_{i\a\b}e^{\a}\w e^{\b}$ for $i=1,2,3.$
\begin{lemma}\label{lem:var1}Let $e(t)$ be a curve in $\cal{M}$. Suppose that 
$\dpt{e^i}{t}=P_{i\a}e^{\a}$ and $P_{ij}=P_{ji}$ for $i,j=1,2,3.$
Then 
$\dfrac{\partial a_{LC}^i}{\partial t}=-\e_{i\a\b}P_{\g\a;\b}e^{\g}$ for $i=1,2,3.$
\end{lemma}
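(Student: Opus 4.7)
The approach is to differentiate the defining condition of the Levi-Civita connection, $d_He = 0$, with respect to $t$ and then solve the resulting algebraic equation for $\dot a_{LC}$ by the Koszul-type symmetrization trick.

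Evaluating the action of $\mathfrak{so}(3)$ on $\R^3$ in the basis $\{Y_1,Y_2,Y_3\}$, the structure equation reads $de^i + \e_{ijk}\,a_{LC}^j\we e^k = 0$. Differentiating in $t$ and substituting $\dot e^i = P_{i\a}e^{\a}$ gives
\begin{equation*}
d\dot e^{i} + \e_{ijk}\,\dot a_{LC}^j\we e^k + \e_{ijk}\,a_{LC}^j\we\dot e^{k} = 0,
\end{equation*}
which is the Leibniz rule $d_H(Pe) = (d_HP)\cdot e + P\cdot(d_He)$ for the covariant exterior derivative (with $\so3$ acting on $\R^3$ standardly and on $\mathrm{M}(3;\R)$ by conjugation) combined with $d_He=0$. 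It collapses to
\begin{equation*}
(d_HP)_{i\a}\we e^{\a} + \e_{ijk}\,\dot a_{LC}^j\we e^k = 0.
\end{equation*}

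By the paper's convention $(d_HP)_{ij}=P_{ij;\a}e^{\a}$, the first term becomes $P_{i\a;\b}\,e^{\b}\we e^{\a}$. Writing the unknown 1-form as $\dot a_{LC}^i = \lambda^i_{\g}e^{\g}$ and equating antisymmetric coefficients of $e^{\b}\we e^{\a}$ yields the linear system
\begin{equation*}
P_{ij;\a} - P_{i\a;j} = \e_{ik\a}\,\lambda^k_{j} - \e_{ikj}\,\lambda^k_{\a}.
\end{equation*}
Using the contraction identity $\e_{pmn}\e_{pij}=\d_{mi}\d_{nj}-\d_{mj}\d_{ni}$, one checks that the ansatz $\lambda^i_{\g} = -\e_{i\a\b}P_{\g\a;\b}$ is a solution: the expansion produces four $P$-terms, two of which cancel by the hypothesis $P_{ij}=P_{ji}$ and the other two give $P_{ij;\a}-P_{i\a;j}$. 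This yields the claimed formula $\dot a_{LC}^i = -\e_{i\a\b}P_{\g\a;\b}e^{\g}$.

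The only non-routine point is the role of the symmetry $P_{ij}=P_{ji}$: this is exactly the condition that $\dot e \in H_e\cal{M}$, so the curve $e(t)$ lifts a genuine curve of metrics on $M$, and the algebraic system for $\lambda$ is uniquely solved by the symmetrization—reflecting the uniqueness of the Levi-Civita connection along the one-parameter family $\g(t)=i(e(t))$. I expect no substantive obstacle beyond careful $\e$-index bookkeeping in verifying the ansatz.
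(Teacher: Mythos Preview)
Your proposal is correct and follows essentially the same route as the paper: differentiate the structure equation $d_He=0$ with respect to $t$, write $\dot a_{LC}^i=Q_{ij}e^j$, and solve the resulting algebraic system using the symmetry $P_{ij}=P_{ji}$. The only cosmetic difference is that the paper dualizes to the basis $\hat e^{\g}$ (getting $\e_{\g kj}P_{ij;k}+\e_{\g k\b}\e_{i\a\b}Q_{\a k}=0$) and solves directly for $Q$, whereas you equate coefficients of $e^{\b}\wedge e^{\a}$ and verify the ansatz; the $\epsilon$-bookkeeping is identical.
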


\begin{proof}
Put $\dot{a}_{LC}^i=Q_{ij}e^j$ for $i=1,2,3$. Then
\begin{align*}
  0&=  \pt{(d_He)}{t} ={} \pt{}{t}(de+[a_{LC}\w e])=d\dot{e}+[\dot{a}_{LC}\w e]+[a_{LC}\w \dot{e}]
    ={} d_H\dot{e}+[\dot{a}_{LC}\w e]\\
    &={} P_{ij;k}e^{kj}Y_i+[Q_{\a k}e^kY_{\a}\w e^{\b}Y_{\b}] 
    ={} (\e_{\g kj}P_{ij;k}+\e_{\g k\b}\e_{i\a\b}Q_{\a k})\hat{e}^{\g}Y_i
\end{align*}
Thus $\e_{\g kj}P_{ij;k}+\e_{\g k\b}\e_{i\a\b}Q_{\a k}=0$ for $\g,i=1,2,3$. Since $(P_{ij})$ is symmetric, we see $Q_{ij}=-\e_{i\a\b}P_{j\a;\b}$ for $i,j=1,2,3.$
\end{proof}

\begin{lemma}\label{lem:var2}
Let $(e,a,A)$ be a curve in $\cal{M}\times\cal{A}\times\Om^0(P;\rm{M}(3;\R))^{\so3}$. Suppose that 
    $\dpt{e^i}{t}=P_{ij}e^j$, $\dpt{a^i}{t}=Q_{ij}e^j$ and $\dpt{A}{t}=B$ for $i=1,2,3.$ Then 
    \begin{align*}
    \pt{}{t}(A_{ij;k})=B_{ij;k}-A_{ij;\a}P_{\a k}+\e_{i\a\g}A_{\g j}Q_{\a k}+\e_{j\a\g}A_{i\g}Q_{\a k}\quad\text{for}\quad i,j,k=1,2,3.
    \end{align*}
In particular,  
\begin{align*}
\dpt{}{t}(A_{i\a;\a})=B_{i\a;\a}-A_{i\a;\b}P_{\b\a}+\e_{i\b\g}A_{\g\a}Q_{\b\a}+\e_{\a\b\g}A_{i\g}Q_{\b\a}\quad\text{for}\quad i=1,2,3.
\end{align*}
\end{lemma}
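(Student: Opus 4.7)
The plan is to differentiate the defining relation
\[
(d_HA)_{ij} \;=\; (dA+[a,A])_{ij} \;=\; A_{ij;\alpha}\,e^{\alpha}
\]
with respect to $t$ and to read off the coefficients in the frame $\{e^1,e^2,e^3\}$. Since exterior differentiation on $P$ commutes with $\partial/\partial t$, the left-hand side becomes
\[
\pt{}{t}\bigl(d_HA\bigr)
\;=\; d\dot A + [\dot a\wedge A] + [a\wedge \dot A]
\;=\; d_HB + [\dot a,A],
\]
using $\dot A=B$. The first summand, being a matrix-valued horizontal $1$-form, expands as $d_HB=B_{ij;k}\,e^{k}\,E_{ij}$ (where $E_{ij}$ denotes the matrix basis). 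For the second summand, $\dot a=\dot a^{\alpha}Y_{\alpha}$ acts on the matrix $A$ by the adjoint representation of $\mathfrak{so}(3)$ on $M(3;\mathbb{R})$, which in components reads $[Y_{\alpha},A]_{ij}=\epsilon_{i\alpha\gamma}A_{\gamma j}+\epsilon_{j\alpha\gamma}A_{i\gamma}$. Substituting $\dot a^{\alpha}=Q_{\alpha k}e^{k}$ yields
\[
[\dot a,A]_{ij}
\;=\;\bigl(\epsilon_{i\alpha\gamma}A_{\gamma j}+\epsilon_{j\alpha\gamma}A_{i\gamma}\bigr)Q_{\alpha k}\,e^{k}.
\]

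On the right-hand side, differentiating $A_{ij;\alpha}e^{\alpha}$ and using $\dot e^{\alpha}=P_{\alpha\beta}e^{\beta}$ gives
\[
\pt{}{t}\!\bigl(A_{ij;\alpha}\bigr)\,e^{\alpha}\;+\;A_{ij;\alpha}P_{\alpha k}\,e^{k}.
\]
Equating the two expressions and comparing the coefficients of $e^{k}$ produces the stated formula
\[
\pt{}{t}(A_{ij;k})
\;=\;B_{ij;k}-A_{ij;\alpha}P_{\alpha k}+\epsilon_{i\alpha\gamma}A_{\gamma j}Q_{\alpha k}+\epsilon_{j\alpha\gamma}A_{i\gamma}Q_{\alpha k}.
\]
The ``divergence'' identity then follows simply by setting $j=k=\alpha$ and summing; since the contraction commutes with $\partial/\partial t$ this is immediate.

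I do not expect a real obstacle here: once one accepts that $d$ and $\partial_t$ commute on time-dependent forms, the proof is a bookkeeping exercise. The only point that requires care is the sign convention for the $\mathfrak{so}(3)$-action on $M(3;\mathbb{R})$, which must be fixed consistently with the definition of $d_H$ used earlier in the paper so that the $\epsilon_{i\alpha\gamma}$- and $\epsilon_{j\alpha\gamma}$-terms come out with the signs appearing in the statement. Apart from this verification of signs, no additional input beyond the Leibniz rule is needed.
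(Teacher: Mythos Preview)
Your proof is correct and follows exactly the same approach as the paper: differentiate the defining relation $(d_HA)_{ij}=A_{ij;\alpha}e^{\alpha}$, compute $\partial_t(d_HA)=d_HB+[\dot a,A]$ on the left and $\partial_t(A_{ij;\alpha})e^{\alpha}+A_{ij;\alpha}P_{\alpha k}e^{k}$ on the right, and compare coefficients. The paper's proof is terser (it records only these two identities and says ``by comparing these''), so your write-up simply makes explicit the component form of $[Y_\alpha,A]$ and the relabeling needed for the contracted identity.
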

\begin{proof}
By definition, we have 
$\dpt{}{t}(d_HA)=d_HB+(Q_{kj}e^j)[Y_k,A]$ and $\dpt{}{t}(d_HA)_{ij}=\dpt{}{t}(A_{ij;k})e^k+A_{ij;k}(P_{k\a}e^{\a})$ for $i,j=1,2,3$.
By comparing these, we obtain the equations in Lemma \ref{lem:var2}.
\end{proof}

For $e\in\cal{M}$, denote by $R(e)$ the scalar curvature of $\g=i(e)\in \bf{M}$. 
\begin{lemma}\label{lem:var3}
Let $(e(t),S(t))$ be a curve in $\cal{M}\times \Om^0(P;\sym3))^{\so3}$. Put $\dfrac{\partial S}{\partial t}=\dot{S}$ and $\dfrac{\partial e^i}{\partial t}=P_{ij}e^j$ for $i=1,2,3.$ Then 
\begin{align*}
   &\dpt{}{t}\int_{M}\rm{R}(e)\rm{vol}(e)=-2\int_{M}\rm{tr}(GP)\rm{vol}(e),\quad
   \dpt{}{t}(\rm{vol}(e))=\rm{tr}(T)\rm{vol}(e),\\
   &\dpt{}{t}(\det{S})=\rm{tr}(\tilde{S}\dot{S}).
   \end{align*}
\end{lemma}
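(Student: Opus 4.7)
I would handle the three variation formulas separately, in increasing order of difficulty, and remark at the start that the ``$T$'' appearing in the volume formula of the statement should read ``$P$'', since no $T$ is introduced in the hypothesis.

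The volume formula is the quickest. Since $\rm{vol}(e) = e^1 \w e^2 \w e^3$ and $\dpt{e^i}{t}=P_{ij}e^j$, Leibniz's rule gives
\begin{align*}
\dpt{}{t}\rm{vol}(e) = \sum_{i=1}^3 e^1\w\cdots\w \dot e^i \w\cdots\w e^3 = \sum_i P_{ii}\,\rm{vol}(e) = \rm{tr}(P)\,\rm{vol}(e),
\end{align*}
because the off-diagonal pieces $P_{ij}e^j$ with $j\neq i$ are annihilated by the wedge. For $\dpt{}{t}\det S$, I would use the identity $\det S = \tfrac{1}{6}\e_{ijk}\e_{lmn}S_{il}S_{jm}S_{kn}$; differentiating and invoking $\tilde S_{li} = \tfrac{1}{2}\e_{ijk}\e_{lmn}S_{jm}S_{kn}$ yields $\dpt{}{t}\det S = \tilde S_{li}\dot S_{il} = \rm{tr}(\tilde S \dot S)$. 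This is just the Jacobi formula specialized to $3\times 3$ matrices, and it uses exactly the definition of the adjugate recorded after Proposition \ref{prop:pre3}.

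The main step is the Einstein--Hilbert variation. My plan is to reduce to the classical formula on a closed manifold,
\begin{align*}
\dpt{}{t}\int_M \rm{R}(\g)\,\rm{vol}(\g) = -\int_M G^{ij}\dot\g_{ij}\,\rm{vol}(\g),
\end{align*}
where $G_{ij} = \rm{Ric}_{ij}-\tfrac{1}{2}\rm{R}\g_{ij}$ is the Einstein tensor and the divergence terms disappear because $M$ is closed. To translate coframe variations into metric variations I would work pointwise in the orthonormal frame $\{e^i\}$: from $\g = \sum_i e^i\otimes e^i$ and $\dot e^i = P_{ij}e^j$ one reads off $\dot\g_{ij} = P_{ij}+P_{ji}$ in that frame. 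Substituting and using symmetry of $G$ gives $-\int_M G_{ij}(P_{ij}+P_{ji})\rm{vol}(e) = -2\int_M \rm{tr}(GP)\rm{vol}(e)$.

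The one point requiring care is identifying the matrix $G=(G_{ij})$ defined by $d_H a_{LC} = G_{\a\b}\hat e^\b Y_\a$ with the orthonormal-frame Einstein tensor; I would simply invoke the remark made just before Proposition \ref{prop:pre5}. The only other place a mistake could creep in is the sign/normalization in the Einstein--Hilbert variation, which I would double-check against the sign convention $G_{ij}=\rm{Ric}_{ij}-\tfrac{1}{2}\rm{R}\g_{ij}$ used in the paper. Beyond that, the argument is entirely a collection of elementary multilinear algebra identities, so I do not anticipate any real obstacle.
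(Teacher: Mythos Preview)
Your proposal is correct, including the observation that ``$T$'' should read ``$P$''. For the volume and determinant formulas your argument matches the paper's, which dismisses them as ``straightforward.''

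For the Einstein--Hilbert variation, however, you take a genuinely different route from the paper. You import the classical Riemannian result $\dpt{}{t}\int_M R(\g)\rm{vol}(\g) = -\int_M G^{ij}\dot\g_{ij}\rm{vol}(\g)$ and then translate $\dot\g$ into frame language via $\dot\g_{ij}=P_{ij}+P_{ji}$. The paper instead stays entirely in the moving-frame formalism: using the $\fso3$-pairing $\langle A,B\rangle = -\tfrac{1}{2}\tr(AB)$, it writes $\tr(G)\rm{vol}(e)=\langle d_Ha_{LC},e\rangle$ (so $R(e)\rm{vol}(e)=-2\langle d_Ha_{LC},e\rangle$), differentiates this pairing directly to get $\langle d_H\dot a,e\rangle + \langle d_Ha,\dot e\rangle$, discards the first term as an exact form on the closed manifold $M$ (since $d_He=0$), and reads off $\tr(GP)\rm{vol}(e)$ from the second. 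Your approach is shorter if the classical formula is granted, but it outsources the analytic content (the vanishing of the divergence terms); the paper's approach is self-contained within the coframe/connection machinery of Sections~\ref{sec:pre} and~\ref{sec:var}, and in particular it makes transparent exactly where closedness of $M$ and the torsion-free condition $d_He=0$ enter.
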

\begin{proof}
The last two are straightforward. Let $\langle A,B \rangle:= -\dfrac{1}{2}\rm{tr}(AB)$ for $A,B\in \fso3$. Since $G$ is the Einstein tensor for $\g=i(e)$ on a 3-manifold $M$, we have $\rm{tr}(G)=-\dfrac{1}{2}R(e).$ Thus 
\begin{align*}
    \pt{}{t}\int_{M}\rm{R}(e)\rm{vol}(e) &=-2\pt{}{t}\int_{M}\rm{tr}(G)\rm{vol}(e)
    = -2\int_{M}\pt{}{t}\langle d_Ha, e\rangle \\
    &= -2\int_{M}\left(\langle d_H\dot{a},e\rangle+\langle G_{ij}Y_i\hat{e}^j,\dot{e}\rangle\right) =-2\int_{M}\rm{tr}(GP)\rm{vol}(e).
\end{align*}\end{proof}
%%%%%%%%%%%%%%%%%%%%%%%%%%%%%%%%%%%%%%%%%%%%%%%%%%
\section{Proof of the main results}\label{sec:pro}
%%%%%%%%%%%%%%%%%%%%%%%%%%%%%%%%%%%%%%%%%%%%%%%%%
\subsection{Proof of Theorem \ref{thm:con1}}
Let $\un{c}(t)=(\g(t),\pi(t))$ be a curve in $T^*\bf{M}$ and $c(t) \subset \cal{P}$  a lift curve of $\un{c}(t)$. We can easily see that $\pi(t)$ satisfies positive-definiteness and $\nabla_{\a}\pi^{i\a}=0$ for $i=1,2,3$ if and only if $S(t)$ of the lift $c(t)=(e(t),S(t)\rm{vol}(e(t)))$ satisfies positive-definiteness and $S_{i\a;\a}=0$ for $i=1,2,3.$ Here note that by Lemma \ref{lem:pre1}, we have $\pi^{ij}\dpt{}{x^i}\dpt{}{x^j}=\dfrac{1}{2}(f^*S)_{ij}e_ie_j$ for any local section $f:(\cal{U};x^1,x^2,x^3)\to P(=M\times \so3)$, where $(e_1,e_2,e_3)$ is the dual orthonormal basis of $(f^*e^1,f^*e^2,f^*e^3)$.

Since the 2-form $\Om'$ on $H\cal{P}$ is non-degenerate, for any function $F'$ on $\cal{P}$, we can define uniquely the horizontal vector field $X^{F'} \in \Om^0(\cal{P},H\cal{P})$ by $dF'(Z)=\Om'(X^{F'},Z)$ for all $Z\in H\cal{P}$. Also for a function $F$ on $(T^*\bf{M},\Om)$, denote by $X^F$ the Hamiltonian vector field of $F$. From now, for a function $F$ on $T^*\bf{M}$, put $F'=j^*F$, where $j:\cal{P}\to T^*\bf{M}$. Then by $j^*\Om=\Om'$, the vector field $X^{F'}$ is the lift of $X^F$ satisfying $j_{*}X^{F'}=X^{F}$. Thus it suffices to check $X^{H'}=X^s$, where $X^s$ is the horizontal vector field on $\cal{P}$ defined by solution curves $c(t)=(e(t),S(t)\rm{vol}(e(t))) \subset \cal{P}$ of (\ref{eq:pre10}) in Proposition \ref{prop:pre5}.

For a horizontal curve $c(t)=(e(t),S(t)\rm{vol}(e(t))) \subset \cal{P}$, set $T(t)$
 and $V(t)$ by\\ $\dpt{c(t)}{t}=(T(t),V(t)\rm{vol}(e(t))) \in H_{c(t)}\cal{P}=\Om^0(P,\sym3)^{\so3}\oplus \Om^3(P;\sym3)^{\so3}_{hor}$. Then applying Leibniz rule gives 
\begin{equation}\label{eq:pro13}
\begin{split}
    &\pt{e^i}{t}=T_{ij}e^j\quad \text{for}\quad i=1,2,3, \\
    &\pt{S}{t}=V-\rm{tr}(T)S.
\end{split}
\end{equation}

Define $T^s,V^s\in \Om^0(P,\sym3)^{\so3}$ by $X^s=(T^s,V^s\rm{vol}(e)) \in H_{(e,S\rm{vol}(e))}$ for each $(e,S\vol (e)) \in \cal{P}$. (\ref{eq:pre10}) and (\ref{eq:pro13}) give 
\begin{equation}\label{eq:pro14}
    \begin{split}
        &T^s=\tilde{S},\\
        &V^s= -G+2\dets \cdot I.
    \end{split}
\end{equation}

Put 
\begin{equation}\label{eq:pro15}
    \begin{split}
        &H_1(\g,\pi)=\int_{M}\rm{R}(\g)\vol (\g), \\
        &H_2(\g,\pi)=8\int_{M}\det{(\pi)}\vol (\g)^{-2}
    \end{split}
\end{equation}
for $(\g,\pi) \in T^*\bf{M}$. Then $H=-\dfrac{1}{2}H_1+H_2.$ Lemma \ref{lem:pre1} gives 
\begin{equation}\no
    \begin{split}
        &H_1'(e,S\vol (e))=\int_M\rm{R}(e)\vol (e), \\
        &H_2'(e,S\vol (e))=\int_M\det (S) \vol (e) 
    \end{split}
\end{equation}
for $(e,S\vol (e))\in \cal{P}$. Here $\det (\pi)=\dfrac{1}{8}\det (S)\rm{vol}(e)^3$ for $j(e,S\vol (e))=(\g,\pi) \in T^*\bf{M}$.
Take any $Z=(T,V\vol (e))\in H_{(e,S\vol (e))}\cal{P}$.
Lemma \ref{lem:var3} gives 
\begin{equation}\label{eq:pro17}
    \begin{split}
        &dH_1'(Z)=-2\int_M\tr (GT)\vol (e), \\
        &dH_2'(Z)= \int_M\left(\tr (\tilde{S}V)-2\dets \tr (I\cdot T)\right)\vol (e).
    \end{split}
\end{equation}
On the other hand, we have 
\begin{align}\label{eq:pro18}
    \Om'(X^{H_i'},Z)=\int_M\tr (T^{H_i'}V-TV^{H_i'})\vol (e)
\end{align}
for $X^{H_i'}=(T^{H_i'},V^{H_i'}\vol (e)) \in H_{(e,S\vol (e))}\cal{P}$. Comparing (\ref{eq:pro17}) with (\ref{eq:pro18}) gives 
\begin{equation}\label{eq:pro19}
    \begin{split}
        &T^{H_1'}=0,\quad V^{H_1'}=2G, \\
        &T^{H'_{2}}=\tilde{S},\quad V^{H_2'}=2\dets \cdot I.
    \end{split}
\end{equation}
Thus $X^{H'}=-\dfrac{1}{2}X^{H_1'}+X^{H_2'}=X^s$. \qed 

%%%%%%%%%%%%%%%%%%%%%%%%%%%%%%%%eq%%%%%%%%%%%%%%%%%%
\subsection{Proof of Proposition \ref{prop:con2}}
Let $c(t)\subset \cal{P}$ be a lift curve of an orbit $\un{c}(t)$ of the Hamiltonian dynamical system of $H$. Then by Theorem \ref{thm:con1}, $c(t)$ is a solution of (\ref{eq:pre10}) in Proposition \ref{prop:pre5}. It suffices to prove that if the solution $c(t)$ satisfies (\ref{eq:pre9}) at some $t_0$, then $c(t)$ satisfies (\ref{eq:pre9}) at all $t$.
\quad\\

By (\ref{eq:pre10}), Lemma \ref{lem:var1} and \ref{lem:var2}, we have 
\begin{align*}
    \pt{}{t}(S_{i\a;\a}) &= (-\rm{tr}(\tilde{S})S-G+2\dets I)_{i\a;\a}- S_{i\a;\b}\tilde{S}_{\b\a}\\
    \no&\quad + \e_{i\b\g}S_{\g\a}(-\e_{\b kl}\tilde{S}_{\a k;l})+\e_{\a\b\g}S_{i\g}(-\e_{\b kl}\tilde{S}_{\a k;l}) \\
   \no &= -\tilde{S}_{\b\b}S_{i\a;\a}-S_{\a\g;\g}\tilde{S}_{\a i}+S_{\g\a;i}\tilde{S}_{\a\g}-\dets_{;i}
\end{align*}
for $i=1,2,3, $where $d\dets =\dets_{;k}e^k$. Moreover we have 
$\tilde{S}_{\a\b}S_{\a\b;i} = \tr (\tilde{S}S_{;i})=\dets_{;i}$ for $i=1,2,3$. Hence we obtain 
\begin{align}\label{eq:pro20}
    \pt{}{t}(S_{i\a;\a})=-\tilde{S}_{\a\a}S_{i\b;\b}-\tilde{S}_{i\a}S_{\a\b;\b} \quad\text{for}\quad i=1,2,3.
\end{align}
Here (\ref{eq:pro20}) is a first-order homogeneous linear ODE for  $(S_{1\a;\a}(t),S_{2\a;\a}(t),S_{3\a;\a}(t))$.  Thus by the uniqueness of the solution,  we can conclude that if the solution $c(t)$ satisfies (\ref{eq:pre9}) at some $t_0$, then $c(t)$ satisfies (\ref{eq:pre9}) at all $t$. \qed

%%%%%%%%%%%%%%%%%%%%%%%%%%%%%%%%%%%%%%%%%%%%%%%%%%
\section{Properties of the Hamiltonian dynamical system}\label{sec:prop}
%%%%%%%%%%%%%%%%%%%%%%%%%%%%%%%%%%%%%%%%%%%%%%%%%%%
\subsection{Scaling of the dynamical system}
Let $c(t)=(e(t),S(t)\vol (e(t)))$ be a horizontal curve in $\cal{P}$. Fix $\a,\b,\k >0$. Define $c'(t')=(e'(t'),S'(t')\vol (e'(t'))) \subset \cal{P}$ by $e'(t')=\a e(\k t')$ and $S'(t')=\b S(\k t')$. Recall that $H, H_1$ and $H_2$ defined by (\ref{eq:con12}) and (\ref{eq:pro15}).
\begin{prop}\label{prop:prop1}
Take $a,b >0$ with $\a=2^{\frac{1}{2}}a^{\frac{1}{2}}b^{\frac{1}{4}}\k^{-\frac{1}{4}}$ and $\b=b^{-\frac{1}{2}}\k^{\frac{1}{2}}$. Then $c(t)$ is a lift of an orbit of the Hamiltonian dynamical system of $H$ if and only if $c'(t')$ is a lift of that of $-aH_1+bH_2$. 
\end{prop}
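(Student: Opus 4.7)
The plan is to reduce both sides of the equivalence to first-order ODEs on $\cal{P}^+$ and then match the scaled curve directly against the rescaled system. From the proof of Theorem \ref{thm:con1}, in particular the formulas (\ref{eq:pro13}) and (\ref{eq:pro19}), together with the linearity of Hamiltonian vector fields on $(\cal{P},\Om')$, a horizontal curve in $\cal{P}$ is a lift of an orbit of $F = -aH_1 + bH_2$ if and only if it satisfies
\begin{equation}\label{planeq}
\pt{e^i}{t'} = b\tilde{S}_{i\a} e^{\a}, \qquad \pt{S}{t'} = -2aG + 2b(\det S) I - b\tr(\tilde{S})S,
\end{equation}
which specialises to (\ref{eq:pre10}) at $(a,b)=(\tfrac{1}{2},1)$.

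Next I would record how each ingredient of (\ref{planeq}) transforms under $e \mapsto \a e$ and $S \mapsto \b S$. The algebraic pieces are immediate: $\tilde{S}' = \b^2\tilde{S}$, $\det S' = \b^3\det S$, and $\tr(\tilde{S}') = \b^2\tr(\tilde{S})$. For the Einstein tensor, the identity $d(\a e) + a_{LC}\we(\a e) = \a(de + a_{LC}\we e) = 0$ shows that $a_{LC}$ already satisfies the defining equation of the Levi-Civita connection for $e' = \a e$, so by uniqueness $a_{LC}' = a_{LC}$ and hence $d_H a_{LC}' = d_H a_{LC}$; since $\hat{e}'^i = \a^2\hat{e}^i$, the defining identity $d_H a_{LC} = G_{\a\b}\hat{e}^{\b}Y_{\a}$ then forces $G' = \a^{-2}G$. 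The solder and horizontality conditions on $c'(t')$ are preserved because constant rescaling respects the $\sym3\oplus\ant3$ decomposition of $\rm{M}(3;\R)$.

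Finally I would substitute $e'(t') = \a e(\k t')$ and $S'(t') = \b S(\k t')$ into (\ref{planeq}) with primed quantities, apply the chain rule to the left-hand sides, and use (\ref{eq:pre10}) for $c(t)$ to rewrite both sides in terms of the unprimed ingredients at $t = \k t'$. The first equation and the $(\det S)I$ and $\tr(\tilde{S})S$ terms of the second collapse to the single condition $\k = b\b^2$, while matching the Einstein-tensor term yields the independent condition $\b\k\a^2 = 2a$; solving these two equations for $\a$ and $\b$ in terms of $a,b,\k$ produces the scaling factors stated in the proposition, and the argument is reversible because each matching is an algebraic identity in the coefficients. The one delicate point in an otherwise routine computation is the scaling of $G$: because it arises from the curvature of $a_{LC}$ rather than algebraically from $e,S$, one has to combine scale-invariance of $a_{LC}$ with the quadratic scaling of $\hat e$ to arrive at $G' = \a^{-2}G$; once this is settled, the rest is mechanical.
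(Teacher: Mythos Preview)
Your proposal is correct and follows essentially the same route as the paper's proof: both compute how the horizontal velocity $(T,V)$ transforms under the rescaling $e\mapsto\a e$, $S\mapsto\b S$, $t\mapsto\k^{-1}t'$ and then match it against the Hamiltonian vector field of $-aH_1+bH_2$ read off from (\ref{eq:pro19}), arriving at the pair of constraints $\k=b\b^2$ and $\a^2\b\k=2a$. The only differences are cosmetic: you write the flow directly as an ODE in $(e,S)$ (your \eqref{planeq}) whereas the paper works through the intermediate $(T,V)$ decomposition (\ref{eq:pro13})--(\ref{eq:pro14}), and you supply an explicit justification for the scaling $G'=\a^{-2}G$ which the paper merely asserts.
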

\begin{proof}
Assume that $c(t)$ is a lift of an orbit of the Hamiltonian dynamical ststem of $H$. 

Put $\dpt{c(t)}{t}=(T(t),V(t)\vol (e(t)))$ and $\dpt{c'(t')}{t'}=(T'(t'),V'(t')\vol (e'(t')))$. Then by (\ref{eq:pro13}), we have 
\begin{align*}
    &\pt{(e')^i}{t'}=\k T(\k t')_{ij}(e')^j\quad \text{for}\quad i=1,2,3, \\
    &\pt{S'}{t'}= \b\k \pt{S}{t}(\k t')= \b\k V(\k t')-\tr (T'(t'))S'(t').
\end{align*}
Thus $T'(t')=\k T(\k t')$ and $V'(t')=\b\k V(\k t')$. Moreover by assumption and (\ref{eq:pro14})
\begin{align*}
    &T(\k t')=\tilde{S}(\k t') , \\
    &V(\k t')= -G(\k t')+ 2\det{(S(\k t'))}\cdot I.
\end{align*}
Using $\tilde{S}(\k t')=\b^{-2}\tilde{S'(t')}$,  $\det(S(\k t'))=\b^{-3}\det (S'(t'))$ and $G(\k t')=\a^2G'(t')$, where $G'(t')$ denotes the orthonormal representation of the  Einstein tensor defined by $e'(t')$, we have
\begin{equation}\label{eq:prop21}
    \begin{split}
        &T'(t')=\k\b^{-2}\tilde{S'}(t'), \\
        &V'(t')=-\k\a^2\b G'(t') +2\k\b^{-2} \det (S'(t'))\cdot I.
    \end{split}
\end{equation}
On the other hand, by (\ref{eq:pro19}), we have 
\begin{equation}\label{eq:prop22}
    \begin{split}
        &-aT^{H_1'}+bT^{H_2'}=b\tilde{S'}, \\
        &-aV^{H_1'}+bV^{H_2'}=-2aG'+2b\det{(S')}\cdot I.
    \end{split}
\end{equation}
By assumption, (\ref{eq:prop21}) coincides with (\ref{eq:prop22}). Hence $c'(t')$ is a lift of $-aH_1+bH_2$. The inverse is the same as above.
\end{proof}

\begin{remark}\label{rem:prop1}
Let $\un{c}(t)$ be a curve in $T^*\bf{M}$, and $c(t)=(e(t),S(t)\vol (e(t)))$ a lift curve of $\un{c}(t)$ in $\cal{P}$. In order to apply Proposition \ref{prop:prop1} to $H_{G_2}=-H_1+\dfrac{1}{8}H_2$, take constants $\k>0$, $\a=2^{-\frac{1}{4}}\k^{-\frac{1}{4}}$ and $\b=2^{\frac{3}{4}}\k^{\frac{1}{2}}$. Then by Proposition \ref{prop:prop1}, we see that $\un{c}(t)$ is an orbit of the Hamiltonian dynamical system of $H_{G_2}$ if and only if $c'(t'):=(\a^{-1}e(\k^{-1}t'),\b^{-1}S(\k^{-1}t')\vol (\a^{-1}e(\k^{-1}t')))$ is a lift  of an orbit of the Hamiltonian dynamical system of $H$ defined by (\ref{eq:con12}). Thus by Theorem \ref{thm:con1}, we see that $\un{c}(t)$ satisfies the conditions in Theorem \ref{a} if and only if $c'(t')$ yields a torsion-free $G_2$-structure by (\ref{eq:pre8}). This is the statement of Theorem \ref{a}. Here note that the positive-definiteness and the divergence-free conditions are preserved by the scaling with positive constants. Thus we can also deduce Proposition \ref{b} form Proposition \ref{prop:con2}.
\end{remark}

\subsection{Variations of some functionals} 

Let $\un{c}(t)=(\g(t),\pi(t))$ be an orbit of the Hamiltonian dynamical system of $H$, and $c(t)=(e(t),S(t)\vol (e(t)))$ be a lift of $\un{c}(t)$ in $\cal{P}$. Put 
$H_3(\g,\pi)=\int_{M}\vol (\g)$ for $(\g,\pi)\in T^*\bf{M}.$
By (\ref{eq:pro14}) and (\ref{eq:pro17}), we can easily deduce variation formulas of $H_1$, $H_2$ and $H_3$ along the orbit $\un{c}(t)$:
\begin{align*}
    \pt{H_1}{t}&=\pt{}{t}\int_M\rm{R}(\g)\vol (\g) =-2\int_M\tr{(G\tilde{S})}\vol (e)=\int_M\left(\rm{R}(e)\tr (\tilde{S})-2\tr (\ric\tilde{S})\right)\vol (e), \\
    \pt{H_2}{t}&= 8\pt{}{t}\int_M\det (\pi)\vol (\g)^{-2}=\frac{1}{2}\pt{H_1}{t},\\
    \pt{H_3}{t}&= \pt{}{t}\int_M\vol (\g) =\int_M\tr (\tilde{S}) \vol (e).
\end{align*}
Here $\ric$ denotes the  Ricci tensor $\ric =G-\tr{(G)}I$. For example, by the third formula, we can see that if $\pi(t_0)$ is positive- or negative-definite at every point on $M$, then
\begin{align*}
\left.\pt{H_3}{t}=\frac{\partial}{\partial t}\int_{M}\mathrm{vol}(\g(t))\right|_{t=t_0} >0.
\end{align*}
 Here note that  $\tilde{S}=\det {(S)}S^{-1}$ and $\tr{(\tilde{S})}=\det (S)\tr (S^{-1})$.

%%%%%%%%%%%%%%%%%%%%%%%%%%%%%%%%%%%%%%%%%%%%%%%%%%%%%%%%%%%%%%
\subsection{Observation on Bryant-Salamon's examples}

Take $\s\in \R$ and $e_0\in \cal{M}$ satisfying $G=-\s\cdot I$. Here we identify $I$ with the identity matrix-valued function in $\Om^0(P,\sym3)^{\so3}$. Since $M$ is a 3-manifold, $\tr (G)=-\dfrac{1}{2}\rm{R}(e)$. Then $\g=i(e_0)\in \bf{M}$ is a constant curvature Riemannian metric with scalar curvature $6\s$.
%Put $S_0=I \in \Om^0(P,\sym3)^{\so3}$.
Define an enbedding $i_{e_0}: \R_{>0}\times \R \to \cal{P}$ by 
\begin{align*}
    \R_{>0}\times \R \ni (a,b) \mapsto (ae_0,bI \vol (ae_0)) \in \cal{P}.
\end{align*}
Then by Proposition \ref{prop:pre5} and Theorem \ref{thm:con1}, we can easily see that the integral curves of the horizontal vector fields $X^{H'}$ on the image $i_{e_0}(\R_{>0}\times \R)$ are restricted in $i_{e_0}(\R_{>0}\times \R)$ and are solutions of the following dynamical system:
\begin{equation}\label{eq:propdyn1}
    \begin{split}
        &\frac{da}{dt}=ab^2, \\
        &\frac{db}{dt}=\s a^{-2}-b^3\quad \text{for}\quad (a,b)\in\R_{>0}\times \R.
    \end{split}
\end{equation}
Here we remark about Bryant-Salamon's examples in (\cite{BS}, Section 3). The examples are diffeomorphic to $M\times S^3\times (t_1,t_2)$. The torsion-free $G_2$-structures of the examples are constructed by one-parameter families of $\su3$-structures on $M\times S^3$ satisfying the conditions in Definition \ref{def:pre1}. Moreover the $\su3$-structures are constructed by constant curvature metrics on $M$ and on $S^3$. Thus the examples are locally isomorphic to torsion-free $G_2$-structures constructed by (\ref{eq:pre8}) form solution curves $(a(t)e_0,b(t)I\vol (a(t)e_0))$ of (\ref{eq:propdyn1}) satisfying $b(t)>0$.

Put $x(t)=a(t)^2$ and $y(t)=a(t)b(t)$. Then (\ref{eq:propdyn1}) is equivalent to the following dynamical system:
\begin{equation}\label{eq:propdyn2}
    \begin{split}
        &\frac{dx}{dt}=2y^2, \\
        &\frac{dy}{dt}=\s x^{-\frac{1}{2}}\quad\text{for}\quad (x,y)\in \R_{>0}\times \R.
    \end{split}
\end{equation}
We can draw the vector field $(\dfrac{dx}{dt},\dfrac{dy}{dt})$ on $\R_{>0}\times \R$. Observing the vector field, it seems that the integral curve $(x(t),y(t))$  of (\ref{eq:propdyn2}) could extend to $[0,+\infty)$ for every initial value $(x(0),y(0))=(x_0,y_0) \in \R_{>0}\times \R$. 
Assume $\s >0.$ Then every integral curve $(x(t),y(t))$ defined on $[0,+\infty)$ has $y(t)>0$ for all $t$ more than sufficiently large $t_1$. Hence the curve $(a(t)e_0,b(t)I\vol (a(t)e_0)) \subset \cal{P}$ corresponding to $(x(t),y(t))$ yields torsion-free $G_2$-structure on $M\times \so3\times (t_1,+\infty)$ by (\ref{eq:pre8}). Next, assume $\s<0$. Then every integral curve $(x(t),y(t))$ defined on $[0,+\infty)$ has $y(t)<0$ for all sufficiently large $t$. Hence by Remark \ref{rem:con1}, the curve in $\cal{P}$ corresponding to $(x(t),y(t))$ yields indefinite $G^*_2$-structure on $M\times \so3\times (t_1,+\infty)$ by (\ref{eq:pre8}). Finally, assume $\s=0$. Then every integral curve $(x(t),y(t))$ satisfy $y(t)=y_0$ for all $t\in [0,+\infty).$ Hence the curve in $\cal{P}$ corresponding to $(x(t),y(t))$ is always contained in $\cal{P}^+$, or otherwise never contained. 

For example, let us consider the case of $\s>0$. One can guess the behavior of integral curves as follows.
If we start from an initial value $(x_0,y_0)$ with $y_0<0$, the smooth integral curve $(x(t),y(t))$ defined on $[0,+\infty)$ yields  indefinite $G^*_2$-structure on $M\times\so3\times (0,t_1)$, and on the other hand yields torsion-free positive-definite $G_2$-structure on $M\times \so3\times (t_1,+\infty)$ for some $t_1>0$. The $\su3$-structures defined by $(x(t),y(t))$ break down at $t_1$. In this case, $G^*_2$- and $G_2$-structures are smoothly continued as an orbit of the Hamiltonian dynamical system of $H$ on $T^*\bf{M}$.

%%%%%%%%%%%%%%%%%%%%%%%%%%%%%%%%%%%%%%%%%%%%%%
\section*{Acknowledgements}
%%%%%%%%%%%%%%%%%%%%%%%%%%%%%%%%%%%%%%%%%%%%%%%
The author thanks N. Kawazumi for careful reading of the manuscript and for giving many comments. He also thanks T. Ishibashi for his question on the existence of the Hamiltonian or Lagrangian formulation of the dynamical system in Proposition \ref{prop:pre5}. Further he thanks K. Krasnov for many comments and for telling him related works \cite{HK,HKSS}. Finally he thanks the anonymous referee for many helpful and constructive comments. This work was supported by the Leading Graduate Course for Frontiers of Mathematical Sciences and Physics.

%%%%%%%%%%%%%%%%%%%%%%%%%%%%%%%%%%%%%%%%%%%%%%%%%%%%%
\bibliographystyle{alpha}
\bibliography{paper2}
\end{document}